\newtheoremstyle{case}{}{}{\normalfont}{}{shape}{:}{ }{}
\newtheorem{thm}{Theorem}[section]
\newtheorem{lem}[thm]{Lemma}
\newtheorem{cor}[thm]{Corollary}
\newtheorem{claim}[thm]{Claim}
\theoremstyle{definition}
\numberwithin{equation}{section}
\newtheoremstyle{case}{}{}{\normalfont}{}{shape}{\normalfont:}{ }{}
\theoremstyle{case}
\def\comment#1{}
\newcommand{\beq}{\begin{eqnarray*}}
\newcommand{\eeq}{\end{eqnarray*}}
\def\build#1_#2^#3{\mathrel{\mathop{\kern 0pt#1}\limits_{#2}^{#3}}}
\newcommand{\beqs}{\begin{eqnarray}}
\newcommand{\eeqs}{\end{eqnarray}}
\newcommand{\beqsn}{\begin{eqnarray*}}
\newcommand{\eeqsn}{\end{eqnarray*}}
\newcommand{\bN}{\mathbb{N}}
\newcommand{\cA}{\mathcal{A}}
\newcommand{\cF}{\mathcal{F}}
\newcommand{\cH}{\mathcal{H}}
\newcommand{\cS}{\mathcal{S}}
\numberwithin{equation}{section}
\title{Vertex-isoperimetric stability in the hypercube}
\author{Micha{\l} Przykucki\thanks{School of Mathematics, University of Birmingham, Edgbaston, Birmingham, United Kingdom. Supported by the EPSRC grant EP/P026729/1. E-mail: m.j.przykucki@bham.ac.uk.} \and Alexander Roberts\thanks{Mathematical Institute, University of Oxford, Andrew Wiles Building, Radcliffe Observatory Quarter, Woodstock Road, Oxford, United Kingdom. E-mail: robertsa@maths.ox.ac.uk.}}
\begin{document}

\maketitle

\begin{abstract}
Harper's Theorem states that, in a hypercube, among all sets of a given fixed size the Hamming balls have minimal closed neighbourhoods. In this paper we prove a stability-like result for Harper's Theorem: if the closed neighbourhood of a set is close to minimal in the hypercube, then the set must be very close to a Hamming ball around some vertex.
\end{abstract}

\section{Introduction}\label{waffle1}
For all natural numbers $n$, we define the $n$-dimensional hypercube $Q_n = (V,E)$ where $V = \{0,1\}^n$ and $uv \in E$ if the two vertices differ in exactly one co-ordinate. For a vertex $u \in V$ inductively we let $\Gamma^0(u) = \{u\}$, $\Gamma^1(u) = \Gamma(u) = \{v \in V(Q_n): uv \in E(Q_n)\}$, and for $k \geq 2$ we have $\Gamma^{k}(u) = \bigcup_{v \in \Gamma^{k-1}(u)}\Gamma(v) \setminus \Gamma^{k-2}(u)$ (so $\Gamma^k(u)$ is the set of vertices which have shortest path length to $u$ equal to $k$). For a subset of the vertices $U \subseteq V$, we also write $\Gamma(U) = \bigcup_{u \in U} \Gamma(u)$, and we define the \em closed neighbourhood \em of $U$ to be $U \cup \Gamma(U)$, the set of vertices in $U$ together with the neighbourhood of $U$. Note that according to our definition, $\Gamma(U)$ is not necessarily disjoint from $U$; namely, every $u \in U$ with at least one neighbour in $U$, will be contained in $\Gamma(U)$. 

Let $A, B \subseteq [n] = \{1,2,\ldots,n\}$ with $|A| = |B| = r$. We say that $A <_L B$, i.e., that $A$ precedes $B$ in the \emph{lexicographic} (or \emph{lex}) \emph{ordering} on the sets of size $r$, if and only if
\[
\min A \triangle B = \min ((A \cup B) \setminus (A \cap B)) \in A.
\]
Next, let $<_S$ be the ordering of subsets of $[n]$ such that $A <_S B$ if $|A| < |B|$ or if $|A| = |B|$ and $A <_L B$. This is known as the \emph{simplicial ordering}. Since with every vertex $v = (v_1, \ldots, v_n) \in V(Q_n)$ we can naturally associate a set $Z_v = \{i \in [n]: v_i = 1\}$, the orderings $<_L$ and $<_S$ induce orderings on $V(Q_n)$: for $u,w \in V(Q_n)$ we have $u <_L w$ if $Z_u <_L Z_w$, and $u <_S w$ if $Z_u <_S Z_w$. The following well known result of Harper \cite{HARP} (see also \cite[\S16]{bolcomb}) shows that initial segments of $<_S$ have minimal closed neighbourhoods.

\begin{thm}\label{harpermod}
 For each $\ell \in \bN$, let $S_\ell$ be the first $\ell$ elements of $V(Q_n)$ according to $<_S$. If $D \subset V(Q_n)$ with $|D| = \ell$, then
  \[
   |\Gamma(D) \cup D| \ge |\Gamma(S_\ell) \cup S_\ell|.
  \]
\end{thm}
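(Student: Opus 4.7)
The plan is the classical \emph{compression} method. For each $i \in [n]$, define the $i$-compression $C_i(D)$ by, for every pair $\{u,u'\}$ of vertices differing only in coordinate $i$ (with $u_i = 0$), setting $C_i(D) \cap \{u,u'\} = \{u\}$ when $|D \cap \{u,u'\}| = 1$ and $C_i(D) \cap \{u,u'\} = D \cap \{u,u'\}$ otherwise. Then $|C_i(D)| = |D|$ and $D$ is pushed toward the all-zero vertex in direction $i$. The first step is to verify $|C_i(D) \cup \Gamma(C_i(D))| \le |D \cup \Gamma(D)|$; this is a local check done by partitioning $V(Q_n)$ into the pairs flipping coordinate $i$, and for each such pair analysing its contribution to the closed neighbourhood via the four adjacent pairs obtained by flipping one of the other $n-1$ coordinates. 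A short case analysis shows no pair contributes more vertices to the closed neighbourhood after compression than before.

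Second, starting from any $D$ with $|D| = \ell$, I would iteratively apply $C_i$'s. The process terminates because $\sum_{v \in D}\sum_i v_i$ strictly decreases at each non-trivial step, producing a \emph{left-compressed} set $D^*$ with $|D^*| = \ell$, fixed by every $C_i$, and satisfying $|D^* \cup \Gamma(D^*)| \le |D \cup \Gamma(D)|$. It therefore suffices to prove the theorem for left-compressed sets.

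Third, induct on $n$. The base $n = 1$ is immediate. For $n \ge 2$, write $D^* = (\{0\}\times A) \cup (\{1\}\times B)$ with $A,B \subseteq V(Q_{n-1})$; invariance under $C_1$ gives $B \subseteq A$, and a direct expansion (using $B \subseteq A$) yields
\[
|D^* \cup \Gamma(D^*)| \;=\; |A \cup \Gamma_{n-1}(A)| \;+\; |A \cup \Gamma_{n-1}(B)|,
\]
where $\Gamma_{n-1}$ denotes the neighbourhood in $Q_{n-1}$. A short analysis of the simplicial order shows that $S_\ell$ decomposes analogously as $(\{0\} \times S^{(n-1)}_a) \cup (\{1\} \times S^{(n-1)}_b)$ with $a + b = \ell$, $b \le a$, and $S^{(n-1)}_b \subseteq S^{(n-1)}_a$, so the same identity holds for the right-hand side of the desired inequality.

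The main obstacle is the mixed term $|A \cup \Gamma_{n-1}(B)|$: the first summand $|A \cup \Gamma_{n-1}(A)|$ is handled immediately by the inductive hypothesis, but the mixed term couples $A$ and $B$ and does not fit the induction as stated. I expect to overcome this by strengthening the inductive statement to a joint bound on $|X \cup \Gamma(Y)|$ for every left-compressed pair $Y \subseteq X \subseteq V(Q_{n-1})$, with $S^{(n-1)}_{|X|}, S^{(n-1)}_{|Y|}$ as the extremisers. A further discrete convexity/rearrangement step then shows that among all splits $(|A|,|B|)$ with $|A|+|B|=\ell$ and $|B|\le|A|$, the canonical simplicial split $(a,b)$ minimises the total, which completes the induction.
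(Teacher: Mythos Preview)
The paper does not give a proof of Theorem~\ref{harpermod}; it is quoted as Harper's classical result with citations to \cite{HARP} and \cite[\S16]{bolcomb}. So there is nothing in the paper to compare your argument against, and your compression-plus-induction outline is indeed one of the standard textbook routes (essentially the one in Bollob\'as's book, which the paper cites).

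That said, as a proof your sketch has a real gap, and you have located it yourself: everything up to and including the identity $|D^*\cup\Gamma(D^*)|=|A\cup\Gamma_{n-1}(A)|+|A\cup\Gamma_{n-1}(B)|$ is correct and routine, but the final paragraph is where the actual content of Harper's theorem lives, and there you only state what you ``expect'' to do. The two-variable strengthening $|X\cup\Gamma(Y)|\ge |S_{|X|}\cup\Gamma(S_{|Y|})|$ for left-compressed $Y\subseteq X$ is genuinely stronger than the inductive hypothesis and needs its own argument (it does not follow from applying plain Harper to $X$ and to $Y$ separately); and the ``discrete convexity/rearrangement step'' showing that among splits $a+b=\ell$, $b\le a$, the simplicial split minimises $|S_a\cup\Gamma_{n-1}(S_a)|+|S_a\cup\Gamma_{n-1}(S_b)|$ relies on the nesting property that the closed neighbourhood of an initial segment is again an initial segment, together with a careful monotonicity check. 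Neither step is false, but neither is proved in your write-up, and a reader who did not already know the argument could not reconstruct it from what you have written. If you want this to be a proof rather than a plan, you must carry out the strengthened two-variable induction explicitly (this is how \cite[\S16]{bolcomb} proceeds).
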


When $\ell = \binom{n}{k}$ and $\sum_{i=0}^{k-1}\binom{n}{i} = o\left(\binom{n}{k}\right),$ $S_{\ell}$ closely resembles a $k$-th neighbourhood (the set of vertices at distance $k$ from a vertex). In this instance, by the well known LYM-inequality (see Lemma \ref{locLYM} to come), the closed neighbourhood of $S_{\ell}$ has size at least
	\begin{align*}
		\left|\Gamma(S_{\ell})\cup S_{\ell}\right| \ge \sum_{i=0}^k \binom{n}{i} + \frac{\left(\ell-\sum_{i=0}^{k-1}\binom{n}{i}\right)}{\binom{n}{k}}\binom{n}{k+1} = \binom{n}{k+1} + O\left(\frac{1}{k} \binom{n}{k}\right).
	\end{align*}

Two questions arise. Firstly, must all sets of order $\binom{n}{k}$ with minimal closed neighbourhood closely resemble a $k$-th neighbourhood of a vertex? Secondly, what happens when a set of size $\binom{n}{k}$ has close to the minimal closed neighbourhood? In this paper we answer the second question through a stability theorem when $k$ is not too large; consequently, our result also answers the first question in the positive. Note that in Theorem~
\ref{thm:f-k-stability} we consider neighbourhoods of sets of vertices rather than closed neighbourhoods, but since these differ by at most $\binom{n}{k}$ vertices this does not change the nature of our result.

\begin{thm}\label{thm:f-k-stability}
Let $\rho$ and $\kappa$ be positive real numbers. Then there exists $n_0 = n_0(\rho,\kappa) \in \bN$ and $\delta = \delta(\rho,\kappa)>0$ such that the following holds: Let $k : \bN \to \bN$ and $p : \bN \to [\rho,\infty)$ be functions such that $k(n) \le \tfrac{\log n}{3\log \log n}$, $\tfrac{k(n)}{p(n)} \leq \kappa$, and $\tfrac{p(n) k(n)^3}{n} \le \delta$. Then for $n \ge n_0$, the following holds: If $A \subseteq V(Q_n)$ with $|A| = \binom{n}{k(n)}$ and $|\Gamma(A)| \le \binom{n}{k(n)+1} + \binom{n}{k(n)}p(n)$, then there exists some $w \in V(Q_n)$ for which we have
  \begin{equation}
   \label{eqn:errorBound}
 |\Gamma^{k(n)}(w) \cap A| \ge \binom{n}{k(n)} - C\binom{n}{k(n)-1}p(n)k(n),
  \end{equation}
where $C = 24 + 33/\rho + 32 \kappa$. 
\end{thm}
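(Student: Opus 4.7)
The plan is to decompose $A$ by Hamming weight and carry out a level-by-level budget argument based on the local LYM inequality (Lemma \ref{locLYM}). Writing $A_j = A \cap V_j$ for the vertices of $A$ of Hamming weight $j$, $a_j = |A_j|$, and $\alpha_j = a_j/\binom{n}{j}$, the fact that $Q_n$ has no edges within a level gives
\[
\Gamma(A) \cap V_j \;=\; \bigl(\Gamma(A_{j-1}) \cup \Gamma(A_{j+1})\bigr) \cap V_j,
\]
and local LYM then yields the clean lower bound $|\Gamma(A)\cap V_j| \ge \max(\alpha_{j-1},\alpha_{j+1}) \binom{n}{j}$ in the relevant range of $j$.

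The first substantive step is to locate the candidate center $w$. Since the translation $A \mapsto A \oplus w$ is an automorphism of $Q_n$ that preserves $|\Gamma(A)|$ and sends $\Gamma^k(w)$ to $V_k$, I would choose $w$ to be the vertex maximising $|A \cap \Gamma^k(w)|$, replace $A$ by $A \oplus w$, and assume $w = 0$. With this normalisation the target inequality becomes $a_k \ge \binom{n}{k} - Cp\binom{n}{k-1}k$, equivalently $\sum_{j \ne k} a_j \le Cp\binom{n}{k-1}k$.

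The second substantive step is to bound $\sum_{j \ne k} a_j$. Combining $|\Gamma(A)| \le \binom{n}{k+1} + p\binom{n}{k}$ with the LYM lower bounds $|\Gamma(A) \cap V_{k\pm 1}| \ge \alpha_k \binom{n}{k \pm 1}$ leaves only a slack of order $(1-\alpha_k)\binom{n}{k+1} + p\binom{n}{k}$ for the shadows of $A_j$ with $j \ne k$. Each $A_{k+i}$ with $i \ge 1$ contributes $a_{k+i}\cdot (n-k-i)/(k+i+1)$ to $|\Gamma(A) \cap V_{k+i+1}|$ and hence to this slack, and the symmetric statement via lower shadows applies for $i \le -1$. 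After splitting into the range $|i| = O(k)$ (handled by the immediate upper or lower shadow of $A_{k+i}$) and the tail $|i| \gg k$ (where iterated shadows give rapidly decaying contributions), and after a bootstrapping step to first establish a crude bound on $\alpha_k$ that in turn sharpens the slack calculation, summing the individual bounds produces the desired total. The assumptions $k/p \le \kappa$ and $pk^3/n \le \delta$ are exactly what is needed to make the resulting sum converge and to balance the constants into $C = 24 + 33/\rho + 32\kappa$.

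The main obstacle I expect is the interplay at levels $j$ close to $k$, where the shadows of $A_j$ land in the same levels $V_{k\pm 1}$ or $V_k$ that already carry the ``main'' contribution from $\Gamma(A_k)$. Since a vertex in $V_k$ counted in $\Gamma(A_{k-1}) \cap V_k$ may simultaneously lie in $A_k$ and in $\Gamma(A_{k+1}) \cap V_k$, one cannot simply add the LYM lower bounds without double-counting and so losing the extra factor of $k$ that is built into the tight error term $Cp\binom{n}{k-1}k$ rather than a weaker $Cp\binom{n}{k}$. Overcoming this is likely to require either a careful inclusion--exclusion at levels $V_{k-1}, V_k, V_{k+1}$, or a monotonisation or compression step so that the shadows become predictable initial segments whose intersection patterns can be calculated exactly.
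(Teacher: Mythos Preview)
Your proposal has a genuine gap at the very first step: translating by the maximiser $w$ of $|A\cap\Gamma^k(w)|$ gives you no a priori lower bound on $\alpha_k$, and the layer-by-layer LYM budget cannot manufacture one. The upper-shadow sum $|\Gamma(A)|\ge\sum_j a_j(n-j)/(j+1)$ penalises mass at levels $j<k$ but \emph{rewards} mass at levels $j>k$, and the lower-shadow bounds in the other direction are smaller by a factor $\Theta(k^2/n)$, so neither direction forces $\alpha_k$ to be large. Concretely, after your translation the inequality you obtain is of the form ``slack $\le (1-\alpha_k)\binom{n}{k+1}+p\binom{n}{k}$'', and your plan is to spend this slack on the shadows of the off-diagonal pieces $A_{k+i}$; but that slack is only small once you already know $\alpha_k$ is close to $1$, which is exactly the conclusion you are trying to prove. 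The ``bootstrapping step to first establish a crude bound on $\alpha_k$'' is where all the content of the theorem lives, and nothing in the proposal addresses it.

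The paper's argument is organised quite differently, and the difference is essential. Rather than guessing $w$ and then accounting, it first runs a deletion procedure (Claim~\ref{claim:discard}) to pass to $B\subseteq A$ in which no subset has too many private neighbours; this costs only $O\bigl(\binom{n}{k-1}pk\bigr)$ vertices. The key structural input is then Lemma~\ref{expand}, which uses the \emph{tightened} Kruskal--Katona bounds of Lemmas~\ref{kruskalkatona} and~\ref{kruskbound} (not just local LYM) to show that in such a $B$ the intersections $|B\cap\Gamma^{2j}(v)|$ grow by a factor $n/(64k^3)$ at each step until they exceed $\tfrac12\binom{n}{k}$. This growth, combined with $k\le\tfrac{\log n}{3\log\log n}$, forces every $v\in B$ to have some $j(v)\le k$ with $B$ almost contained in $\Gamma^{2j(v)}(v)$; a short counting argument shows $j(v)=k$ for almost all $v$; and finally two such vertices at distance $2k$, plus pigeonhole over the $\binom{2k}{k}$ midpoints, locate the centre $w$. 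Only after $w$ is found does a budget computation (Claim~\ref{claim:cleaning}, again relying on the sharpened Kruskal--Katona gain of $(1+\tfrac{1-c}{k})$ over the LYM ratio) yield the final error $C\binom{n}{k-1}pk$. The obstacle you flag about double counting near level $k$ is real but secondary; the primary missing idea is how to find $w$ at all.
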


Throughout the paper we use the notation $f(n) = O(g(n))$ to mean that there exists some constant $C > 0$ such that $|\tfrac{f(n)}{g(n)}| \leq C$ for all $n$, and $f(n) = o(g(n))$ to say that $\tfrac{f(n)}{g(n)} \to 0$ as $n \rightarrow \infty$. For the ease of notation, we shall often denote $k = k(n)$ and $p = p(n)$.

Let us briefly discuss the sharpness and some limitations of Theorem~\ref{thm:f-k-stability}. Let $A \subseteq V(Q_n)$ be a set of size $|A| = \binom{n}{k(n)}$ satisfying $|\Gamma(A)| \le \binom{n}{k(n)+1} + \binom{n}{k(n)}p(n)$. Then let $w \in V(Q_n)$ be a vertex of the hypercube maximising the value of $|\Gamma^{k(n)}(w) \cap A|$. By~\eqref{eqn:errorBound} we know that at most $C\binom{n}{k(n)-1}p(n)k(n)$ vertices of $A$ lie outside of $\Gamma^{k(n)}(w)$. Can we match this bound? For example, the desired size of $|\Gamma(A)|$ (up to some lower order terms) could be obtained by building $A$ as a disjoint union of $ \binom{n}{k(n)}-\binom{n}{k(n)-1}p(n)$ vertices in $\Gamma^{k(n)}(w)$, together with the $(k(n)-1)$th neighbourhoods of $p(n)$ other vertices in the cube. This example shows that our bound on $|A \setminus \Gamma^{k(n)}(w)|$ is sharp up to an $O(k(n))$ multiplicative term. We believe that at least for $k(n)$ not too large, the $O(k(n))$ is an artefact of our proof. However, it is possible that for $k(n)$ large (possibly larger than the assumptions of our theorem allow) this extra factor in~\eqref{eqn:errorBound} is necessary.

In Theorem~\ref{thm:f-k-stability} we assume that the set $A$ we consider satisfies $|A| = \binom{n}{k}$. However, by the fact that the size of $\Gamma(A)$ cannot decrease when we remove elements from $A$, we can obtain a similar result for sets of size slightly larger than $\binom{n}{k}$, for example, of size $|A| = \sum_{i=0}^k \binom{n}{i}$ when $k$ is not too large. We do this by taking a subset $B \subset A$ of size $)\binom{n}{k}$, applying Theorem~\ref{thm:f-k-stability} to $B$, and then observing that $|\Gamma^{k(n)}(w) \cap A| \ge |\Gamma^{k(n)}(w) \cap B| \geq \binom{n}{k(n)} - C\binom{n}{k(n)-1}p(n)k(n)$. We believe that with very similar methods, results concerning sets of size $\alpha \binom{n}{k}$ might also be derived. However, we anticipate the technical details would be rather tedious.

The strongly related edge-boundary version of the isoperimetric problem (see, e.g., Harper \cite{HARPedge}, Bernstein \cite{bernstein}, and Hart \cite{hart}) has been considered in the stability context by Ellis \cite{ellis}, Ellis, Keller and Lifshitz \cite{EKLedge}, Friedgut \cite{friedgut}, and others.

There are many other fundamental stability-type results in graph theory: for example, the Erd{\H o}s-Simonovits Stability Theorem \cite{erd-sim} states that an $H$-free graph that is close to maximum in size must in fact be close to a Tur\'an graph. The famous Erd{\H o}s-Ko-Rado Theorem \cite{EKR} concerning the maximum size of intersecting set systems has been extended using stability results by, among others, Dinur and Friedgut \cite{DFstability}, Bollob\'as, Narayanan and Raigorodskii \cite{BNRstability}, and Devlin and Kahn \cite{DKstability}.

The stability versions of extremal results can often be applied even more widely that the statements they extend; indeed, the motivation for this work came from the authors' forthcoming paper with Alex Scott \cite{shotgun} on the shotgun reconstruction in the hypercube.

The paper is organised as follows. In Section \ref{prelim} we prove some preparatory lemmas including a tightening of the Local LYM Lemma, and in Section \ref{proofmain} we prove Theorem \ref{thm:f-k-stability}.

We also remark that Peter Keevash and Eoin Long have independently been working on a similar problem \cite{PKEL}. They use very different techniques and their results give weaker bounds for the set-sizes we consider but work for general sized sets and also for much larger sets (i.e., for $k \gg \tfrac {\log n}{3\log \log n}$, although with $p = O(1/k)$).

\section{Preliminaries}\label{prelim}

Given $0 \leq r \leq n$, let $[n]^{(r)}$ be the family of all $r$-element subsets of $[n]$, also called a \emph{layer}. Along with the lex ordering $<_L$, another important ordering in finite set theory is the \emph{colexicographic}, or \emph{colex}, ordering $<_C$ of layers $[n]^{(r)}$. For $A,B \in [n]^{(r)}$ we have $A <_C B$ if $A \neq B$ and
\[
\max A \triangle B = \max ((A \cup B) \setminus (A \cap B)) \in B.
\]
An important fact connecting the orderings $<_L$ and $<_C$ on $[n]^{(r)}$ is that if $\cF$ is the initial segment of $<_L$ on $[n]^{(r)}$ then $\cF^c = \{[n] \setminus A : A \in \cF\}$ is isomorphic to the initial segment of colex on $[n]^{(n-r)}$ (more precisely, it is the initial segment of colex on $[n]^{(n-r)}$ using the ``reversed alphabet'' where $n < n-1 < \ldots < 1$). Indeed, if $|A| = |B| = r$ and $A <_L B$ then by definition we have $\min ((A \cup B) \setminus (A \cap B)) \in A$, which implies that $\min ((A^c \cup B^c) \setminus (A^c \cap B^c)) \in B^c$. Treating the alphabet as ``reversed'' we see that indeed $A^c <_C B^c$.

Let us now fix some more notation that will be used throughout this paper. For $\cF \subseteq [n]^{(r)}$ we write
\[
 \partial(\cF) = \{A \in [n]^{(r-1)} : \exists B \in \cF, A \subseteq B\}
\]
for the \emph{shadow} of $\cF$, and similarly
\[
 \partial^+(\cF)= \{A \in [n]^{(r+1)} : \exists B \in \cF, B \subseteq A\}
\]
for the \emph{upper shadow} of $\cF$.

It will be useful to be able to bound from below the size of the neighbourhood of a subset of $[n]^{(r)}$ by some function of the size of the subset itself. A good starting point for this is the local LYM-inequality  \cite[Ex. 13.31(b)]{LYMstuff}.

\begin{lem}
\label{locLYM}
Let $\cA \subseteq [n]^{(r)}$, then
\begin{equation}
\label{eq:locLYMlower}
\frac{|\partial(\cA)|}{\binom{n}{r-1}} \ge \frac{|\cA|}{\binom{n}{r}},
\end{equation}
and
\begin{equation}
\label{eq:locLYMupper}
\frac{|\partial^+(\cA)|}{\binom{n}{r+1}} \ge \frac{|\cA|}{\binom{n}{r}}.
\end{equation}
\end{lem}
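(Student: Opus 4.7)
The plan is to prove both parts of Lemma~\ref{locLYM} by a standard double-counting of incident pairs; the statement is classical and the only challenge is to line up the binomial-coefficient identities cleanly.

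For~\eqref{eq:locLYMlower}, I would count the pairs
\[
 \cP = \{(A,B) : A \in \cA,\ B \in [n]^{(r-1)},\ B \subset A\}.
\]
Counting by the first coordinate, every $A \in \cA$ has exactly $r$ subsets of size $r-1$, so $|\cP| = r |\cA|$. Counting by the second coordinate, every $B$ occurring in $\cP$ lies in $\partial(\cA)$ and is contained in at most $n-r+1$ sets of size $r$, so $|\cP| \le (n-r+1)|\partial(\cA)|$. Combining these and using $\binom{n}{r-1}/\binom{n}{r} = r/(n-r+1)$ gives~\eqref{eq:locLYMlower}.

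For~\eqref{eq:locLYMupper}, the symmetric argument applies to
\[
 \cP^+ = \{(A,B) : A \in \cA,\ B \in [n]^{(r+1)},\ A \subset B\}.
\]
Each $A \in \cA$ contributes exactly $n-r$ choices of $B$, giving $|\cP^+| = (n-r)|\cA|$, while each $B \in \partial^+(\cA)$ contains at most $r+1$ sets of size $r$, giving $|\cP^+| \le (r+1)|\partial^+(\cA)|$. Together with $\binom{n}{r+1}/\binom{n}{r} = (n-r)/(r+1)$, this yields~\eqref{eq:locLYMupper}.

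There is no real obstacle here: the proof is a two-line double count in each direction, and the only thing to be careful about is matching the ratio $r/(n-r+1)$ (respectively $(n-r)/(r+1)$) to the ratio of binomial coefficients on the right-hand side. I would simply present both counts in parallel and conclude.
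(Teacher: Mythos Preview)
Your double-counting argument is correct and is the standard proof of the local LYM inequality. The paper does not actually prove this lemma; it simply cites it as a known result (Lov\'asz, \emph{Combinatorial problems and exercises}, Ex.~13.31(b)), so there is nothing to compare against beyond noting that your argument is exactly the textbook one.
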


Theorem \ref{harpermod} and Lemma \ref{locLYM} give us the following corollary.
\begin{cor}
\label{cor:harperBound}
Let $k \in \bN$ and let $B \subseteq V(Q_n)$ with $|B| \leq \binom{n}{k}$. Then
\[
 |\Gamma(B)| \geq |B| \frac{n}{k+1} - 2 \binom{n}{k}.
\]
\end{cor}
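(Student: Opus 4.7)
My plan is to reduce to initial segments of the simplicial order using Harper's theorem and then estimate the closed neighbourhood of such a segment by identifying the topmost layer it occupies and applying the Local LYM inequality on that layer. By Theorem~\ref{harpermod} applied with $\ell = |B|$, we have $|\Gamma(B) \cup B| \ge |\Gamma(S_\ell) \cup S_\ell|$ and hence $|\Gamma(B)| \ge |\Gamma(S_\ell) \cup S_\ell| - \ell$. Since $\ell \le \binom{n}{k}$, the initial segment $S_\ell$ is contained in $[n]^{(0)} \cup \cdots \cup [n]^{(k)}$. Writing $m = \sum_{i=0}^{k-1}\binom{n}{i}$ and $L_k = S_\ell \cap [n]^{(k)}$, I split on whether $\ell > m$ or $\ell \le m$.

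In the main case $\ell > m$, the set $S_\ell$ fills layers $0, \ldots, k-1$, so every vertex of $[n]^{(k)}$ has a neighbour in $S_\ell$ obtained by deleting a single element, giving $[n]^{(k)} \subseteq \Gamma(S_\ell) \cup S_\ell$. Lemma~\ref{locLYM} applied to $L_k$ yields $|\partial^+(L_k)| \ge |L_k|\tfrac{n-k}{k+1}$, with $\partial^+(L_k) \subseteq \Gamma(S_\ell) \cap [n]^{(k+1)}$. Combining these bounds and using $\ell = m + |L_k|$ to cancel the $m$ contributions gives
\[
|\Gamma(B)| \ge \binom{n}{k} + |L_k|\tfrac{n-2k-1}{k+1}.
\]
Rearranging, the desired inequality then reduces to $3\binom{n}{k} \ge \ell\tfrac{2k+1}{k+1} + m\tfrac{n-2k-1}{k+1}$. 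Since $\ell \le \binom{n}{k}$ and $\tfrac{2k+1}{k+1} < 2$, it suffices to prove $\binom{n}{k} \ge m\tfrac{n-2k-1}{k+1}$. For $n \le 2k+1$ this is immediate, and for $n \ge 2k+2$ I would telescope $\binom{n}{i-1}/\binom{n}{i} \le k/(n-k+1)$ for $i \le k$ to sum $m \le \binom{n}{k}\cdot k/(n-2k+1)$, after which the inequality reduces to the identity $(n-2k+1)(k+1) - k(n-2k-1) = n+1 > 0$.

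The remaining sub-case $\ell \le m$ is handled by the same geometric bound on $m$: in the regime $n \ge 4k$ (which is well within the range of the main theorem) one gets $\ell\tfrac{n}{k+1} \le m\tfrac{n}{k+1} \le 2\binom{n}{k}$, so $\ell\tfrac{n}{k+1} - 2\binom{n}{k} \le 0$ and the statement is trivial. The main obstacle I anticipate is the algebraic bookkeeping in the final step of the main case: a crude application of $\ell \le \binom{n}{k}$ loses a factor and would only yield a worse absolute constant in front of $\binom{n}{k}$, so the argument crucially uses the split $\ell = m + |L_k|$ together with the sharp geometric-series bound on $m$.
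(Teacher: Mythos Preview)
Your approach and the paper's share the same backbone (Harper's theorem followed by Local LYM), so the core idea is right. The difference is in execution, and yours is more complicated than it needs to be and leaves a small gap.

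The paper avoids your case split on $\ell$ versus $m$ entirely. Instead of applying Local LYM only to the top layer $L_k$ and handling the lower layers separately via ``layer $k$ is fully in the neighbourhood'', it applies Local LYM to \emph{every} layer of $S_\ell$ at once: since $S_\ell \subseteq \bigcup_{i \le k}[n]^{(i)}$,
\[
|\Gamma(S_\ell)| \;\ge\; \sum_{i=0}^{k}\bigl|\partial^{+}\bigl(S_\ell \cap [n]^{(i)}\bigr)\bigr| \;\ge\; \sum_{i=0}^{k} \bigl|S_\ell \cap [n]^{(i)}\bigr|\,\frac{n-i}{i+1} \;\ge\; |B|\,\frac{n-k}{k+1},
\]
using that $\tfrac{n-i}{i+1}$ is decreasing in $i$. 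Then $|B|\tfrac{n-k}{k+1} \ge |B|\tfrac{n}{k+1} - \binom{n}{k}$ because $|B|\le\binom{n}{k}$, and subtracting one more $\binom{n}{k}$ for the passage from closed to open neighbourhood finishes the proof. No case analysis, and the argument works for all $n$ and $k$.

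Your main case $\ell > m$ is correct as written. The gap is in your sub-case $\ell \le m$: you only handle it under the extra hypothesis $n \ge 4k$. The corollary, however, is stated for all $k \in \bN$, and in the range $2k+3 \le n < 4k$ the right-hand side $|B|\tfrac{n}{k+1} - 2\binom{n}{k}$ can still be positive while your geometric bound $m\tfrac{n}{k+1} \le 2\binom{n}{k}$ fails (for instance at $n = 3k$ with $k$ large, $m\tfrac{n}{k+1}$ is close to $3\binom{n}{k}$). You are right that this range is irrelevant to Theorem~\ref{thm:f-k-stability}, but it is a gap in the proof of the corollary as stated. The layer-by-layer application of Local LYM above closes it at no extra cost and renders your final ``algebraic bookkeeping'' step unnecessary.
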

\begin{proof}
 We have
 \[
  |\Gamma(B)| \geq |B \cup \Gamma(B)| - |B| \geq |B \cup \Gamma(B)| - \binom{n}{k}.
 \]
 Let $\ell = |B|$. By Theorem \ref{harpermod} we can bound further to obtain
 \[
  |B \cup \Gamma(B)| \geq |\Gamma(S_\ell) \cup S_\ell| \geq |\Gamma(S_\ell)| \geq \sum_{i=1}^{k+1} |\Gamma(S_\ell) \cap [n]^{(i)}| \geq \sum_{i=0}^k |\partial^+(S_\ell \cap [n]^{(i)})|.
 \]
 Applying \eqref{eq:locLYMupper} we then have
 \[
  \sum_{i=0}^k |\partial^+(S_\ell \cap [n]^{(i)})| \geq \sum_{i=0}^k |S_\ell \cap [n]^{(i)}| \frac{n-i}{i+1} \geq |B| \frac{n-k}{k+1} \geq |B| \frac{n}{k+1} - \binom{n}{k},
 \]
 completing the proof.
\end{proof}

Unfortunately the well-known inequality \eqref{eq:locLYMupper} is not quite strong enough for our purpose, and so we will need the following result. 

\begin{lem}\label{kruskalkatona}
Let $m,r,i \in \bN$. If $\cF \subseteq [n]^{(r)}$ has order
\begin{equation}
\label{eq:fOrder}
 |\cF| \in \left [ \binom{n}{r} - \binom{n-i+1}{r}+1,\binom{n}{r} - \binom{n-i}{r} \right ],
\end{equation}
then
\begin{equation}
\label{eq:f+NewBound}
  |\partial^+(\cF)| \ge |\cF| \frac{\binom{n}{r+1} - \binom{n-i}{r+1}}{\binom{n}{r} - \binom{n-i}{r}}.
\end{equation}
\end{lem}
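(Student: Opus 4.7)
The plan is to reduce to $\cF$ being an initial segment of the lex order, compute its upper shadow by splitting along its smallest element, and apply the local LYM to the ``tail'' piece.

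First I would invoke the upper-shadow form of Kruskal--Katona: among all $\cF\subseteq[n]^{(r)}$ with $|\cF|=m$ fixed, $|\partial^+(\cF)|$ is minimised when $\cF$ is the initial lex segment $\cL_m$ of size $m$. (This follows by complementation from the standard colex statement applied to $\cF^c\subseteq[n]^{(n-r)}$, using that $\partial(\cF^c)$ is the complement of $\partial^+(\cF)$.) So it suffices to prove the bound for $\cF=\cL_m$.

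Next I would exploit \eqref{eq:fOrder} by making $\cL_m$ explicit. The first $\binom{n}{r}-\binom{n-i+1}{r}$ sets in lex are precisely the $r$-sets meeting $\{1,\dots,i-1\}$. Writing $m=\binom{n}{r}-\binom{n-i+1}{r}+s$ with $1\le s\le\binom{n-i}{r-1}$, the remaining $s$ sets of $\cL_m$ each have minimum element $i$ and are of the form $\{i\}\cup S$, where $S$ ranges over the initial lex segment $\cS$ of size $s$ in the restricted universe $\{i+1,\dots,n\}^{(r-1)}$. The upper shadow splits cleanly: $(r+1)$-sets meeting $\{1,\dots,i-1\}$ account for $\binom{n}{r+1}-\binom{n-i+1}{r+1}$, while $(r+1)$-sets with minimum exactly $i$ take the form $\{i\}\cup T$ with $T\in\partial^+(\cS)$ (upper shadow taken inside $\{i+1,\dots,n\}$), giving
\[
 |\partial^+(\cL_m)|=\binom{n}{r+1}-\binom{n-i+1}{r+1}+|\partial^+(\cS)|.
\]

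Finally I would apply Lemma \ref{locLYM} to $\cS\subseteq\{i+1,\dots,n\}^{(r-1)}$ to obtain $|\partial^+(\cS)|\ge s\cdot\binom{n-i}{r}/\binom{n-i}{r-1}$, and verify the target inequality by a short calculation. Setting $a=\binom{n}{r}-\binom{n-i}{r}$, $b=\binom{n}{r+1}-\binom{n-i}{r+1}$ and $a',b'$ for the analogous quantities at $i-1$ (so $a-a'=\binom{n-i}{r-1}$ and $b-b'=\binom{n-i}{r}$), the required inequality
\[
 b'+s\cdot\frac{b-b'}{a-a'}\ge (a'+s)\cdot\frac{b}{a}
\]
rearranges to $(ab'-a'b)\bigl(1-\tfrac{s}{a-a'}\bigr)\ge 0$. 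The second factor is non-negative since $s\le a-a'$. The (mild) obstacle is $ab'\ge a'b$, i.e.\ $b(i)/a(i)\le b(i-1)/a(i-1)$; I would deduce this from $b(i)/a(i)=\sum_{j=1}^i\binom{n-j}{r}\big/\sum_{j=1}^i\binom{n-j}{r-1}$ being a weighted average of the decreasing sequence $(n-j-r+1)/r$, hence non-increasing in $i$. Tightness at $s=a-a'$ (where $\cS$ fills $\{i+1,\dots,n\}^{(r-1)}$) gives a good sanity check.
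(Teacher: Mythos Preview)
Your proof is correct and follows essentially the same approach as the paper: reduce to an initial lex segment via Kruskal--Katona (the paper passes to the complement and uses colex, which is equivalent), decompose the segment by the value of the minimum element, apply local LYM to the residual tail, and finish with the weighted-average monotonicity argument for $b(i)/a(i)$ (which the paper isolates as its Corollary~\ref{cor:f+monotone}). The only cosmetic difference is that you lump the first $i-1$ blocks into the single set ``$r$-sets meeting $\{1,\dots,i-1\}$'' and work on the upper-shadow side throughout, whereas the paper keeps the blocks $\cH_0,\dots,\cH_{i-2}$ separate and computes a lower shadow after complementing; your packaging is slightly cleaner but the ideas are identical.
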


We do not claim that Lemma \ref{kruskalkatona} is unknown, but we have been unable to find a reference and so we provide a proof here. The proof uses the following celebrated result of Kruskal and Katona \cite{Kat68,Krusk63}.
 
\begin{thm}\label{thm:kruskat}
Let $\cF \subseteq [n]^{(r)}$ and let $\cA$ be the first $|\cF|$ elements of $[n]^{(r)}$ according to $<_C$. Then $|\partial(\cF)| \ge |\partial(\cA)|$.
\end{thm}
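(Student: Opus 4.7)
The strategy is to apply the complementary (upper-shadow) form of Theorem \ref{thm:kruskat} to reduce to $\cF = \cA_s$, the lex-initial segment of $[n]^{(r)}$ of size $s := |\cF|$, and then carry out a direct chord-style comparison within the single ``block'' $[a+1,t]$ in which $s$ lies, where $a = \binom{n}{r}-\binom{n-i+1}{r}$ and $t = \binom{n}{r}-\binom{n-i}{r}$. As observed in the paragraph preceding Theorem \ref{thm:kruskat}, complementing $\cF$ to a family in $[n]^{(n-r)}$ sends lex-initial segments to colex-initial segments and swaps $\partial^+$ with $\partial$, so Theorem \ref{thm:kruskat} gives $|\partial^+(\cF)| \ge |\partial^+(\cA_s)|$ and it suffices to prove the claim for $\cA_s$.

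Write $b = t-a = \binom{n-i}{r-1}$ and $m = n-i$. For $s \in [a+1,t]$ the initial segment decomposes as $\cA_s = \cA_a \sqcup \bigl\{\{i\}\cup T : T \in \cT_{s'}\bigr\}$, where $s' = s-a$ and $\cT_{s'}$ is the lex-initial segment of $\{i+1,\dots,n\}^{(r-1)}$. Setting $H = |\partial^+(\cA_a)| = \binom{n}{r+1}-\binom{n-i+1}{r+1}$, $G = \binom{m}{r}$, and $g(s') = |\partial^+(\cT_{s'})|$ (with the upper shadow taken inside $\{i+1,\dots,n\}$), a direct count gives $|\partial^+(\cA_s)| = H + g(s')$ and $|\partial^+(\cA_t)| = H + G$. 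Clearing denominators, the claimed inequality $|\partial^+(\cA_s)|/s \ge |\partial^+(\cA_t)|/t$ reduces to
\[
H(b-s') + \bigl(bg(s') - s'G\bigr) \ge a\bigl(G - g(s')\bigr).
\]

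Three ingredients close this out. First, the local LYM inequality \eqref{eq:locLYMupper} applied inside $[m]^{(r-1)}$ to the family $\cT_{s'}$ gives $g(s')/s' \ge G/b = (m-r+1)/r$, which instantly yields both $bg(s')-s'G \ge 0$ and $G - g(s') \le \tfrac{G}{b}(b-s')$. Second, the hockey-stick identity writes $H = \sum_{j=n-i+1}^{n-1}\binom{j}{r}$ and $a = \sum_{j=n-i+1}^{n-1}\binom{j}{r-1}$; the termwise ratios satisfy $\binom{j}{r}/\binom{j}{r-1} = (j-r+1)/r \ge (m-r+1)/r = G/b$ for each $j \ge n-i+1$, so summing yields $H/a \ge G/b$. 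Combining these gives $H(b-s') \ge \tfrac{aG}{b}(b-s') \ge a(G-g(s'))$, and adjoining the nonnegative slack $bg(s')-s'G$ on the left finishes the argument; the degenerate case $a=0$ (i.e.\ $i=1$) is handled separately, since then the inequality collapses to $bg(s') \ge s'G$, which is once again local LYM.

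The main subtlety worth flagging is that, although Theorem \ref{thm:kruskat} reduces the problem to lex-initial segments, the function $h(s) := |\partial^+(\cA_s)|$ is \emph{not} concave and the ratio $h(s)/s$ is not globally monotone: the marginals $h(s+1)-h(s)$ can jump upward when $s$ crosses a block boundary, so one cannot simply argue that $h(s)/s$ decreases over all $s$. Restricting to a single block $[a+1,t]$, the slack term $bg(s')-s'G$ absorbs the failure of $g$ itself to be concave, while the inequality $H/a \ge G/b$ provides exactly the strengthening of local LYM needed to drive the chord comparison inside the block.
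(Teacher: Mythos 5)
Your write-up does not prove Theorem \ref{thm:kruskat}. Theorem \ref{thm:kruskat} is the classical Kruskal--Katona theorem, which the paper cites as a celebrated external result (Kruskal 1963, Katona 1968) and does not reprove. Your argument cannot serve as a proof of it: the very first step is ``apply the complementary (upper-shadow) form of Theorem \ref{thm:kruskat} to reduce to $\cF = \cA_s$,'' which invokes the theorem being proved and is therefore circular. What your calculation actually establishes is the ratio inequality $|\partial^+(\cA_s)|/s \ge |\partial^+(\cA_t)|/t$ for $s$ in the block $[a+1,t]$, which (noting $t = \binom{n}{r}-\binom{n-i}{r}$ and $|\partial^+(\cA_t)| = \binom{n}{r+1}-\binom{n-i}{r+1}$) is precisely the conclusion of Lemma \ref{kruskalkatona}. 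So you have supplied a blind proof for the wrong statement.

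That said, read as a proof of Lemma \ref{kruskalkatona}, your argument is correct, and it is a genuine variant of the paper's. The paper decomposes the colex-initial segment $\cH$ into $i-1$ pieces $\cH_0,\ldots,\cH_{i-2}$ plus a partial piece $\cS$, computes $|\partial \cH|$ by counting new shadow elements layer by layer, applies local LYM only to $\cS$, and then notes that $\cS$'s per-element weight $\tfrac{n-r-(i-1)}{r}$ is the smallest in the weighted average, so increasing $|\cS|$ only decreases the average and the worst case is $|\cS| = \binom{n-i}{r-1}$. You instead split into just two pieces $\cA_a \sqcup \{\{i\}\cup T : T \in \cT_{s'}\}$, clear denominators to get the linear inequality $H(b-s') + (bg(s')-s'G) \ge a(G-g(s'))$, handle the new block with local LYM, and compare $H/a$ against $G/b$ via the hockey-stick decomposition and a mediant bound. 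Your observation that the slack $bg(s')-s'G \ge 0$ absorbs the failure of $g$ to be concave is the right way to make the chord comparison explicit; the paper sidesteps this by phrasing the reduction as monotonicity of a weighted average in $s$. Both routes rest on the same two ingredients (Theorem \ref{thm:kruskat} and local LYM) and differ only in bookkeeping.
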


For the ease of reading, for $0 \leq m \leq n$ we shall use the standard notation $[m,n] = \{m, m+1, \ldots, n\}$.

\begin{proof}[Proof of Lemma \ref{kruskalkatona}]
Let $m,r,i \in \bN$ and suppose $\cF \subseteq [n]^{(r)}$ satisfies \eqref{eq:fOrder}. It is easy to see that $\partial^+(\cF) = (\partial(\cF^c))^c$, and so it suffices to estimate $|\partial(\cF^c)|$. By Theorem \ref{thm:kruskat}, the size of the shadow of $\cF^c$ is at least the size of the shadow of the initial segment of size $|\cF|$ in the $<_C$ order on $[n]^{(n-r)}$.

So suppose that $\cH \subset [n]^{(n-r)}$ is an initial segment of $<_C$ order of size as in \eqref{eq:fOrder}. We first want to claim that
\[
 |\cH| = \sum_{j=0}^{i-2} \binom{n-j-1}{r-1} + s,
\]
where $1 \leq s \le \binom{n-i}{r-1}$. Indeed, observe that the first $\binom{n}{r} - \binom{n-i}{r}$ elements in the $<_L$ order on $[n]^{(r)}$ are the sets that are not fully contained in $[i+1,n]$. These can be listed as the $\binom{n-1}{r-1}$ sets that contain $1$, followed by the $\binom{n-2}{r-1}$ sets that contain $2$ but do not contain $1$, etc., followed finally by the $\binom{n-i}{r-1}$ sets $A$ such that $A \cap [i] = \{i\}$. A similar argument holds for the lower bound in \eqref{eq:fOrder}, which proves our claim.

For $j=0,\ldots,i-2$, let
\[
 \cH_j = \left \{A \cup [n+1-j, n] : A \in [n-j-1]^{(n-r-j)} \right \},
\]
so that $|\cH_j| = \binom{n-j-1}{n-r-j} = \binom{n-j-1}{r-1}$. Then $\cH$, being the initial segment of the $<_C$ order on $[n]^{(n-r)}$, can be expressed as the disjoint union $\cH = \bigcup_{j=0}^{i-2} \cH_j \cup \cS$, where
\[
\cS \subset \left \{A \cup [n+2-i, n] : A \in [n-i]^{(n-r-(i-1))} \right \}
\]
has size $s$.  We may then write the shadow of $\cH$ as the disjoint union
\[
 \partial \cH = \bigcup_{j=0}^{i-2} \left (\partial \cH_j \setminus (\partial \cH_0 \cup \ldots \cup \partial \cH_{j-1}) \right) \cup \left (\partial \cS \setminus (\partial \cH_0 \cup \ldots \cup \partial \cH_{i-2}) \right).
\]
	
For each $j$, $\partial \cH_j \setminus (\partial \cH_0 \cup \ldots \cup \partial \cH_{j-1})$ contains exactly the sets of the form $A \cup [n+1-j, n]$ where $A \in [n-j-1]^{(n-r-j-1)}$. Writing $\cS = \{A \cup [n+2-i,n] : A \in \cA\}$ (so $\cA \subseteq [n-i]^{(n-r-(i-1))}$ has $|\cA| = s$) we similarly see that 
\[
 \partial \cS \setminus (\partial \cH_0 \cup \ldots \cup \partial \cH_{i-2}) = \{A \cup [n+2-i,n] : A \in \partial \cA\}.
\]
	
Hence $\partial \cH$ is the disjoint union and consequently
\begin{align*}
| \partial \cH | & = \bigcup_{j=0}^{i-2} | \{A \cup [n+1-j, n] : A \in [n-j-1]^{(n-r-j-1)}\} | \\
 & \qquad \cup | \{A \cup [n+2-i,n] : A \in \partial \cA\} | \\
 & = \sum_{j=0}^{i-2} \binom{n-j-1}{n-r-j-1} + |\partial \cA|.
\end{align*}
Observing that $(n-j-1) - (n-r-j-1) = r$ and applying \eqref{eq:locLYMlower}, we see
\begin{align*}
 |\partial \cH| & \ge \sum_{j=0}^{i-2} \binom{n-j-1}{r} + \frac{n-r-(i-1)}{r}|\cA| \\
 & =   \sum_{j=0}^{i-2} \frac{n-r-j}{r}\binom{n-j-1}{r-1} + \frac{n-r-(i-1)}{r}s.
\end{align*}
If we divide the above expression by $|\cH|$, we can think of this lower bound as a ``weighted average'', with the weights of the elements of $\cH_j$ equal to $\frac{n-r-j}{r}$, and the weights of the elements of $\cS$ equal to $\frac{n-r-(i-1)}{r}$. This last weight is the smallest, hence increasing $s$ only decreases this average. Therefore we get
\begin{align}
\label{eq:f+Weights}
 \frac{|\partial \cH|}{|\cH|} & \ge \frac{\sum_{j=0}^{i-1} \frac{n-r-j}{r}\binom{n-j-1}{r-1}}{\sum_{j=0}^{i-1}\binom{n-j-1}{r-1}} \nonumber \\
 & = \frac{\sum_{j=0}^{i-1} \binom{n-j-1}{r}}{\sum_{j=0}^{i-1} \binom{n-j-1}{r-1}} \\
 & = \frac{\binom{n}{r+1}-\binom{n-i}{r+1}}{\binom{n}{r} - \binom{n-i}{r}}, \nonumber
\end{align}
completing the proof of the lemma.
\end{proof}

\begin{cor}
 \label{cor:f+monotone}
 The sequence $\frac{\binom{n}{r+1} - \binom{n-i}{r+1}}{\binom{n}{r} - \binom{n-i}{r}}$ in \eqref{eq:f+NewBound} is non-increasing in $i$.
\end{cor}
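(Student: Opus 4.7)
The plan is to express the ratio in closed form as a weighted average of a non-increasing sequence, and then observe that partial weighted averages of a non-increasing sequence can only decrease as more terms are included.

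First I would use Pascal's identity to telescope. Writing
\[
\binom{n}{r}-\binom{n-i}{r} = \sum_{j=0}^{i-1}\Bigl(\binom{n-j}{r}-\binom{n-j-1}{r}\Bigr) = \sum_{j=0}^{i-1}\binom{n-j-1}{r-1},
\]
and analogously
\[
\binom{n}{r+1}-\binom{n-i}{r+1} = \sum_{j=0}^{i-1}\binom{n-j-1}{r},
\]
so (using $\binom{n-j-1}{r} = \tfrac{n-j-r}{r}\binom{n-j-1}{r-1}$) the quantity in \eqref{eq:f+NewBound} becomes
\[
\frac{\binom{n}{r+1}-\binom{n-i}{r+1}}{\binom{n}{r}-\binom{n-i}{r}} \;=\; \frac{\sum_{j=0}^{i-1} w_j c_j}{\sum_{j=0}^{i-1} w_j}, \quad \text{where } w_j=\binom{n-j-1}{r-1},\ c_j=\tfrac{n-j-r}{r}.
\]
This is essentially the same manipulation already carried out in the derivation of \eqref{eq:f+Weights}, so I would just quote that identity.

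Next I would invoke the general fact that if $c_0 \ge c_1 \ge c_2 \ge \cdots$ and the $w_j>0$, then $A_i := \frac{\sum_{j=0}^{i-1} w_j c_j}{\sum_{j=0}^{i-1} w_j}$ is non-increasing in $i$. This is immediate from
\[
A_{i+1}-A_i \;=\; \frac{w_i\,(c_i - A_i)}{\sum_{j=0}^{i}w_j},
\]
which is $\le 0$ since $c_i\le c_j$ for all $j<i$ forces $c_i\le A_i$. In our setting $c_j = \tfrac{n-j-r}{r}$ is clearly decreasing in $j$, so the required monotonicity follows.

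There is no real obstacle here: the only thing to be careful about is to make sure $i$ is in the admissible range so that all the weights $w_j = \binom{n-j-1}{r-1}$ are strictly positive (this needs $n-i \ge r$, which is exactly the range where $\binom{n-i}{r}$ is nonzero and the identity has content; outside this range the statement is trivial or vacuous). With that caveat the proof is two lines once the telescoping identity has been written down.
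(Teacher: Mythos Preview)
Your argument is correct and is essentially the same as the paper's: both rewrite the ratio via the telescoping identity \eqref{eq:f+Weights} as a weighted average $\sum w_j c_j / \sum w_j$ with $c_j = (n-r-j)/r$ decreasing, and then observe that appending a further term with a smaller $c$-value can only lower the average. The paper also handles the boundary $i \ge n-r+1$ by noting the sequence stabilises there, which matches your caveat about the admissible range.
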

\begin{proof}
 If $i \geq n-r+1$ then $\binom{n-i}{r+1} = \binom{n-i}{r} = 0$ and the sequence stabilises. For $i \leq n-r$, by \eqref{eq:f+Weights} we have
 \[
  \frac{\binom{n}{r+1}-\binom{n-i}{r+1}}{\binom{n}{r} - \binom{n-i}{r}} = \frac{\sum_{j=0}^{i-1} \frac{n-r-j}{r}\binom{n-j-1}{r-1}}{\sum_{j=0}^{i-1}\binom{n-j-1}{r-1}}.
 \]
 If we move from $i$ to $i+1$ on the left-hand side, in the weighted average on the right-hand side we obtain another term $\binom{n-i-1}{r-1}$ with weight $\tfrac{n-r-i}{r}$; this weight is smaller than all the preceding weights and so the average decreases.
\end{proof}

The next lemma somewhat cleans up the multiplicative factor in Lemma \ref{kruskalkatona}.

\begin{lem}\label{kruskbound}
Suppose $\alpha,c \in (0,1)$ are such that $\binom{n}{r} - \binom{\alpha n}{r} = c\binom{n}{r}$. Then
 \[
  \frac{\binom{n}{r+1} - \binom{\alpha n}{r+1}}{\binom{n}{r} - \binom{\alpha n}{r}} \ge \frac{n-r}{r+1} \left( 1 + \frac{1-c}{r} \right).
 \]
\end{lem}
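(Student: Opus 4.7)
My plan is to reduce the target inequality to a short algebraic inequality about $c$ and $1-\alpha$, and then verify that inequality using the Weierstrass product inequality.

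First I would use $\binom{m}{r+1} = \tfrac{m-r}{r+1}\binom{m}{r}$ at both $m=n$ and $m=\alpha n$, together with the substitution $\binom{\alpha n}{r} = (1-c)\binom{n}{r}$. After cancelling $\binom{n}{r}$ from top and bottom of the ratio on the left, a line of algebra rewrites the left-hand side of the lemma as
\[
\frac{n(1-\alpha) + c(\alpha n - r)}{c(r+1)} \;=\; \frac{n(1-\alpha)}{c(r+1)} + \frac{\alpha n - r}{r+1}.
\]
Subtracting the right-hand side of the lemma and collecting a common factor of $(1-c)/(r(r+1))$ (assuming $c<1$; the degenerate case $c=1$, i.e.\ $\alpha n < r$, makes the right-hand side collapse to $\tfrac{n-r}{r+1}$ and reduces to a trivial equality), the target inequality becomes the single clean statement
\[
c(n-r) \;\le\; rn(1-\alpha).
\]

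The main work is then to verify this upper bound on $c$, and this is where I would use the binomial structure that has not yet been exploited. I would write
\[
1-c \;=\; \frac{\binom{\alpha n}{r}}{\binom{n}{r}} \;=\; \prod_{j=0}^{r-1}\frac{\alpha n - j}{n-j} \;=\; \prod_{j=0}^{r-1}\Bigl(1 - \frac{(1-\alpha)n}{n-j}\Bigr),
\]
and apply the Weierstrass product inequality $\prod(1-x_j) \ge 1 - \sum x_j$. This is valid because each factor $(1-\alpha)n/(n-j)$ lies in $[0,1]$ for $j = 0,\ldots,r-1$, which is forced by $r-1 \le \alpha n$ (equivalently $\binom{\alpha n}{r}\ge 1$, i.e.\ $c<1$). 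This gives
\[
c \;\le\; (1-\alpha)n\sum_{j=0}^{r-1}\frac{1}{n-j} \;\le\; (1-\alpha)n\cdot\frac{r}{n-r+1} \;\le\; \frac{rn(1-\alpha)}{n-r},
\]
as required.

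I do not anticipate any real obstacle: the whole argument is essentially two lines of algebra plus one application of Weierstrass. The only thing to be careful about is bookkeeping of the cases $c=1$ and $c<1$, and checking that the simple sum bound $\sum_{j=0}^{r-1}\tfrac{1}{n-j}\le \tfrac{r}{n-r+1}$ is just enough: any slackness in that harmonic sum would only improve the final constant, so the bound is not tight in any delicate sense.
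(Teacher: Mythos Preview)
Your proof is correct and takes a genuinely different route from the paper. The paper proceeds by bounding each factor of the product $\prod_{i=0}^{r-1}\tfrac{\alpha n-i}{n-i}$ below by its smallest term to get $(1-c)\ge\bigl(\tfrac{\alpha n-r}{n-r}\bigr)^r$, deduces $\binom{\alpha n}{r+1}\le (1-c)^{1+1/r}\binom{n}{r+1}$, and then linearises the resulting expression $\tfrac{1-(1-c)^{1+1/r}}{c}$ via Bernoulli's inequality. You instead do the algebra first, reducing the lemma to the single linear inequality $c(n-r)\le rn(1-\alpha)$, and then establish that directly by the Weierstrass product inequality applied to the same product. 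Your route is shorter and avoids the detour through the $r$-th root followed by a re-linearisation; the paper's route, on the other hand, isolates the intermediate bound $\binom{\alpha n}{r+1}\le (1-c)^{1+1/r}\binom{n}{r+1}$, which might be of independent use. One small remark: the case $c=1$ you flag is actually excluded by the hypothesis $c\in(0,1)$, so you need not treat it separately.
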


\begin{proof}
Suppose that $\binom{\alpha n}{r} = (1-c)\binom{n}{r}$. Then
\begin{align*}
 (1-c) & = \prod_{i=0}^{r-1} \frac{\alpha n - i}{n-i} \\
 & = \prod_{i=0}^{r-1} \left( \alpha - (1-\alpha)\frac{i}{n-i} \right) \\
 & \ge \prod_{i=0}^{r-1} \left( \alpha - (1-\alpha)\frac{r}{n-r} \right) \\
 & = \left( \frac{\alpha n - r}{n-r} \right)^r.
\end{align*}
Hence we have that $\tfrac{\alpha n - r}{n-r} \le (1-c)^{1/r}$. Thus
\begin{align*}
 \binom{\alpha n}{r+1} & = \frac{\alpha n - r}{r+1}(1-c)\binom{n}{r} \\
 & = (1-c)\frac{\alpha n -r}{n-r}\frac{n-r}{r+1}\binom{n}{r} \\
 & \le (1-c)^{1+1/r}\binom{n}{r+1}.
\end{align*}
We therefore have
\begin{align*}
 \frac{\binom{n}{r+1} - \binom{\alpha n}{r+1}}{\binom{n}{r} - \binom{\alpha n}{r}} & \geq \frac{\left( 1-(1-c)^{1+1/r} \right)\binom{n}{r+1}}{c\binom{n}{r}} \\
 & = \frac{n-r}{r+1}\frac{c + (1-c) \left(1-(1-c)^{1/r} \right)}{c} \\
 & = \frac{n-r}{r+1} \left(1 + \frac{1-c}{c} \left( 1-(1-c)^{1/r} \right) \right).
\end{align*}
A generalisation of Bernoulli's inequality says that if $x \ge -1$ and $t \in [0,1]$, then we have $(1+x)^t \le 1+tx$. Applying this to the above formula with $x=-c$ and $t=1/r$ we obtain
\begin{align*}
 \frac{\binom{n}{r+1} - \binom{\alpha n}{r+1}}{\binom{n}{r} - \binom{\alpha n}{r}} & \ge \frac{n-r}{r+1} \left (1 + \frac{1-c}{c} \cdot \frac{c}{r} \right) = \frac{n-r}{r+1} \left (1+\frac{1-c}{r} \right).
\end{align*}
\end{proof}

In the proof of Theorem \ref{thm:f-k-stability} we first delete sets of vertices with too many unique neighbours. The next lemma will allow us to impose that after this deletion, we get larger and larger layers around vertices in our set.

\begin{lem}\label{expand}
Let $k = o(\log n)$. For sufficiently large $n$ the following holds. Let $J$ be a subset of the hypercube such that for all $S \subseteq J$,
\begin{equation}
\label{eq:fewUniques}
 |\Gamma(S) \setminus \Gamma(J \setminus S)| \le |S|\frac{n}{k+1} \left(1+\frac{1}{8k} \right).
\end{equation}
Then for any vertex $v$ and $j \le 2k$, if $|J \cap \Gamma^{j}(v)| \in [1,\frac{1}{2}\binom{n}{k}]$, then 
\[
 |J \cap \Gamma^{j+2}(v)| \ge \frac{n}{64k^3}|J \cap \Gamma^{j}(v)|.
\]
\end{lem}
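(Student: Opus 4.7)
The plan is to fix $v$ and $j$, apply \eqref{eq:fewUniques} to $S := J \cap \Gamma^j(v)$, and exploit the natural bijection $u \mapsto Z_u \triangle Z_v$ from $\Gamma^i(v)$ onto $[n]^{(i)}$. Let $\cF_i$ denote the family corresponding to $J \cap \Gamma^i(v)$, let $a_i := |J \cap \Gamma^i(v)|$, and write $\rho := \tfrac{n}{k+1}(1+\tfrac{1}{8k})$. Because the hypercube is bipartite with respect to the parity of distance from $v$, we have $\Gamma(S) \cap \Gamma^{j+1}(v) = \partial^+\cF_j$, and a vertex $u \in \partial^+\cF_j$ has no neighbour in $J\setminus S$ unless it has a neighbour in $J \cap \Gamma^{j+2}(v)$, i.e.\ unless $u \in \partial\cF_{j+2}$. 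Hence \eqref{eq:fewUniques} gives $|\partial^+\cF_j \setminus \partial\cF_{j+2}| \le a_j\rho$, so $|\partial\cF_{j+2}| \ge |\partial^+\cF_j| - a_j\rho$. Combining with the trivial bound $|\partial\cF_{j+2}| \le (j+2)a_{j+2}$ produces
\[
a_{j+2} \ge \frac{|\partial^+\cF_j| - a_j\rho}{j+2}.
\]

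It remains to show $|\partial^+\cF_j|/a_j \ge \rho + (j+2)n/(64k^3)$. For $j \le k$ this follows from Lemma~\ref{kruskbound} applied with $c := a_j/\binom{n}{j}$. When $j<k$, the basic ratio $(n-j)/(j+1)$ already exceeds $\rho$ by $\Omega(n/k^2)$; when $j=k$, the constraint $a_j \le \tfrac{1}{2}\binom{n}{k}$ forces $c \le 1/2$, so the correction factor $(1+(1-c)/k)$ provides a margin of order $n/k^2$ above $\rho$. In either sub-case, dividing by $j+2 \le k+2$ leaves comfortable room above $n/(64k^3)$.

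The main obstacle is the range $k<j\le 2k$, where even as $c\to 0$ Lemma~\ref{kruskbound} caps at $(n-j)/j$, which drops below $\rho$ once $j>k$, so the hypothesis alone yields nothing. Here I would exploit that $a_j \le \tfrac{1}{2}\binom{n}{k}$ is exponentially smaller than $\binom{n}{j}$ in this range, and invoke Theorem~\ref{thm:kruskat} directly. By symmetry, the minimum of $|\partial^+\cF|$ over families of size $a_j$ in $[n]^{(j)}$ is attained by the colex-initial segment $\cF^*$, which lies inside $[a]^{(j)}$ for the smallest $a$ with $\binom{a}{j}\ge a_j$. Decomposing the upper shadow of $\cF^*$ by whether it uses an element outside $[a]$ gives the identity $|\partial^+\cF^*| = |\partial^+_{[a]}\cF^*| + (n-a)|\cF^*|$, and local LYM inside $[a]$ then yields $|\partial^+\cF_j|/a_j \ge n - aj/(j+1) - O(1)$. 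The hypothesis $k(n) \le \log n/(3\log\log n)$ forces $a = O(n/\log n)$, so this ratio is essentially $n$ and vastly dominates $\rho + (j+2)n/(64k^3)$. The crux of the proof is thus bypassing Lemma~\ref{kruskbound} in the large-$j$ regime and extracting the stronger cascade bound from Kruskal--Katona.
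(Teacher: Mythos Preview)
Your reduction to bounding $|\partial^+\cF_j|/a_j$ is correct and essentially matches the paper's (you use $|\partial\cF_{j+2}| \le (j+2)a_{j+2}$, the paper the equivalent degree bound $2k+2$), and your treatment of $j \le k$ via local LYM and Lemma~\ref{kruskbound} is the paper's argument in those cases. The genuine gap is in the regime $k < j \le 2k$. You assert that ``by symmetry'' the upper shadow is minimised by the \emph{colex}-initial segment $\cF^*$, and then use $\cF^* \subseteq [a]^{(j)}$ to write $|\partial^+\cF^*| = |\partial^+_{[a]}\cF^*| + (n-a)|\cF^*|$. But Theorem~\ref{thm:kruskat} says colex-initial minimises the \emph{lower} shadow; by the duality $\partial^+\cF = (\partial\cF^c)^c$ the minimiser for the upper shadow is the \emph{lex}-initial segment of $[n]^{(j)}$ (equivalently, colex on the reversed alphabet --- see the discussion preceding Theorem~\ref{thm:kruskat}). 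The lex-initial segment is not contained in $[a]^{(j)}$ for any small $a$; instead, all of its members \emph{contain} a common prefix $[t]$. Your decomposition therefore does not apply to the actual minimiser, and you have no lower bound on $|\partial^+\cF_j|$ in this range.

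The paper exploits precisely this prefix structure. Since $a_j \le \tfrac12\binom{n}{k} \le \binom{n-(j-k)}{k}$ (using $k = o(\log n)$ and $j \le 2k$), every element of the lex-initial segment of size $a_j$ in $[n]^{(j)}$ contains $[j-k]$, and if $a_j \le \binom{n-j-i}{k-i}$ for some $i \ge 1$ it contains the longer prefix $[j-k+i]$. Stripping this prefix reduces to a lex segment in $[m]^{(r)}$ with $m = n-j+k \approx n$ and $r \le k$; then either local LYM (when $r<k$) or Lemmas~\ref{kruskalkatona} and~\ref{kruskbound} (when $r=k$, with $c \le 3/4$) deliver the required ratio $\tfrac{n}{k+1}(1+\tfrac{1}{4k})$. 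Your intuition that the smallness of $a_j$ relative to $\binom{n}{j}$ is what rescues the large-$j$ case is correct, but the mechanism is prefix-stripping on the lex side, not containment in a small ground set on the colex side.
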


\begin{proof}
Without loss of generality, throughout this proof we assume that $v = (0, \ldots, 0)$, so $Z_v = \emptyset$ and for all $j$ we have $\Gamma^j(v) = [n]^{(j)}$. Let $k = o(\log n)$ and let $J$ be a subset of the vertex set of the hypercube such that \eqref{eq:fewUniques} holds for all $S \subseteq J$. The first and most significant step in the proof will be to find a good lower bound on the ratio $|\partial^+(J \cap \Gamma^{j}(v))| / |J \cap \Gamma^{j}(v)|$, arguing according to three different cases. After this bound is obtained, the lemma will follow quite easily.

Assume that we have $j \le 2k$ with $|J \cap \Gamma^{j}(v)| \in [1,\frac{1}{2}\binom{n}{k}]$. If $j \leq k-1$, then we may appeal to \eqref{eq:locLYMupper} to see that for sufficiently large $n$,
\begin{align*}
 \frac{|\partial^+(J \cap \Gamma^{j}(v))|}{|J \cap \Gamma^{j}(v)|} & \ge \frac{n-j}{j+1} \\
 & \ge \frac{n}{k} - 1 \\
 & = \frac{n}{k+1} \left( 1+\frac{1}{k}-\frac{k+1}{n} \right) \\
 & \ge \frac{n}{k+1} \left( 1+\frac{1}{4k} \right).
\end{align*}

Now suppose that $j \ge k$. By Theorem \ref{thm:kruskat} and the relation between the orders $<_C$ and $<_L$, $|\partial^+(J \cap \Gamma^{j}(v))|$ is minimised when $J \cap \Gamma^{j}(v)$ is the initial segment of size $|J \cap \Gamma^{j}(v)|$ in the $<_L$ order on $[n]^{(j)}$.

First suppose that $|J \cap \Gamma^{j}(v)| \le \binom{n-(j+i)}{k-i}$ for some $i \geq 1$. Then all elements of the initial segment of length $|J \cap \Gamma^{j}(v)|$ in the $<_L$ order on $[n]^{(j)}$ contain the set $[j-k+i]$. So remove $[j-k+i]$ from all sets in $J \cap \Gamma^{j}(v)$ and instead work in $[j-k+i+1,n]$. We now have an initial segment of size $|J \cap \Gamma^{j}(v)|$ in the $<_L$ order in $[j-k+i+1,n]^{(k-i)}$ and so, for sufficiently large $n,$ \eqref{eq:locLYMupper}, together with the fact that $j \leq 2k$ and $i \geq 1$, give
\begin{align*}
 \label{eq: lex1}
 |\partial^+(J\cap \Gamma^{j}(v))| & \ge |J \cap \Gamma^{j}(v)|\frac{n-j}{k-i+1} \\
 & \ge |J \cap \Gamma^{j}(v)|\frac{n}{k+1} \left( 1 + \frac{1}{4k} \right).
\end{align*}

Finally let us consider the case when $|J \cap \Gamma^{j}(v)| > \binom{n-(j+1)}{k-1}$. Since $k=o(\log n)$, we have $|J \cap \Gamma^{j}(v)| \le \frac{1}{2}\binom{n}{k} \le \frac{3}{5} \binom{n-j+k}{k}$ for sufficiently large $n$. Therefore we see that all elements of the initial segment of length $|J \cap \Gamma^{j}(v)|$ in the $<_L$ order on $[n]^{(j)}$ contain the set $[j-k]$. Hence remove $[j-k]$ from all sets and instead work in $[j-k+1,n]$. For convenience, we relabel our ground set so that we work with the initial segment of $<_L$ order in $[m]^{(k)}$ where $m=n-j+k$ instead. For $n$ (and so also $m$) large enough we have
\[
\binom{m}{k} - \binom{m(\frac{1}{3})^{1/k}}{k} \geq \binom{m}{k} - \frac{m^k}{3k!} \geq \frac{3}{5} \binom{m}{k} = \frac{3}{5} \binom{n-j+k}{k} \geq |J \cap \Gamma^{j}(v)|.
\]
By Corollary \ref{cor:f+monotone}, we can apply Lemma \ref{kruskalkatona} with $\cF = J \cap \Gamma^{j}(v)$, $n=m$, $n-i = m(\frac{1}{3})^{1/k}$, and $r=k$, to get
\begin{equation}
\label{eqn:upperShadBound}
 |\partial^+(J \cap \Gamma^{j}(v))| \ge |J \cap \Gamma^{j}(v)| \frac{\binom{m}{k+1} - \binom{m(\frac{1}{3})^{1/k}}{k+1}}{\binom{m}{k} - \binom{m(\frac{1}{3})^{1/k}}{k}}.
\end{equation}
(We note that $m(\frac{1}{3})^{1/k}$ should be an integer to apply Lemma \ref{kruskalkatona}. This can be fixed by considering the ceiling of $m(\frac{1}{3})^{1/k}$, but for ease of reading we refrain from doing this.)
Now since $k$ grows sufficiently slowly, for $n$ sufficiently large we have
\begin{align*}
 \binom{m(\frac{1}{3})^{1/k}}{k} & = \frac{m(\frac{1}{3})^{1/k} (m(\frac{1}{3})^{1/k}-1) \ldots (m(\frac{1}{3})^{1/k}-k+1)}{k!} \\ & = \binom{m}{k}\prod_{i=0}^{k-1}\frac{m(\frac{1}{3})^{1/k}-i}{m-i} \\ &\geq \binom{m}{k} \left(\frac{\left(\frac{1}{3}\right)^{1/k}-\frac{k-1}{m}}{1-\frac{k-1}{m}}\right)^k \geq \frac{1}{4}\binom{m}{k}.
\end{align*}

So for $n$ large enough we have $\binom{m}{k} - \binom{m(\frac{1}{3})^{1/k}}{k} \leq \tfrac{3}{4} \binom{m}{k}$ and we can apply Lemma \ref{kruskbound} to \eqref{eqn:upperShadBound} to find
\begin{align*}
 |\partial^+(J \cap \Gamma^{j}(v))| & \ge |J \cap \Gamma^{j}(v)|\frac{m-k}{k+1} \left( 1+\frac{1-\frac{3}{4}}{k} \right) \\
  & \ge |J \cap \Gamma^{j}(v)|\frac{n}{k+1} \left( 1+\frac{1}{4k} \right).
\end{align*}
In all cases, we see that
\begin{equation}
\label{eq:upperShadowSmall}
 |\partial^+(J \cap \Gamma^{j}(v))| \ge |J \cap \Gamma^{j}(v)|\frac{n}{k+1} \left( 1+\frac{1}{4k} \right).
\end{equation}

Since $j \leq 2k$, each vertex in $\Gamma^{j+2}(v)$ is adjacent to at most $2(k+1)$ vertices in $\partial^+(J \cap \Gamma^{j}(v))$. Together with \eqref{eq:upperShadowSmall}, this gives
\begin{align*}
 |\Gamma(J\cap \Gamma^{j}(v)) \setminus \Gamma(J \setminus \Gamma^{j}(v))| & \ge |\partial^+(J \cap \Gamma^{j}(v))| - (2k+2)|J \cap \Gamma^{j+2}(v)| \\
 & \ge |J \cap \Gamma^{j}(v)|\frac{n}{k+1} \left( 1+\frac{1}{4k} \right) - (2k+2)|J \cap \Gamma^{j+2}(v)|.
\end{align*}
On the other hand, by \eqref{eq:fewUniques}, 
\[
 |\Gamma(J\cap \Gamma^{j}(v)) \setminus \Gamma(J \setminus \Gamma^{j}(v))| \le |J \cap \Gamma^{j}(v)|\frac{n}{k+1} \left( 1+\frac{1}{8k} \right).
\]
Together these inequalities give 
\[
 (2k+2)|J \cap \Gamma^{j+2}(v)| \ge |J \cap \Gamma^{j}(v)|\frac{n}{(k+1)8k},
\]
and so $|J \cap \Gamma^{j+2}(v)| \ge \frac{n}{16k(k+1)^2}|J \cap \Gamma^{j}(v)| \ge \frac{n}{64k^3}|J \cap \Gamma^{j}(v)|$.
\end{proof}

\section{Proof of Theorem \ref{thm:f-k-stability}}
\label{proofmain}

In this section we prove Theorem \ref{thm:f-k-stability}. The nature of the proof is much like that of the Erd\H{o}s-Simonovits stability arguments \cite{erd-sim}. Starting with a set $A$ with close to minimal neighbourhood size, we first delete sets of vertices which contribute too many unique neighbours (neighbours unseen by the rest of $A$). We then build up, layer by layer, a rough structure around a vertex of $A$. If $A$ has many vertices in the $j$-th neighbourhood of a vertex $v$, then there must be many vertices of $A$ in $\Gamma^{j+2}(v)$ (else $A \cap \Gamma^j(v)$ has too many unique neighbours). This will mean that for each vertex $v \in A$, there is some $j(v)$ such that almost all of $A$ is contained in $\Gamma^{2j(v)}(v)$, and we then show that $j(v) = k$ for almost all $v \in A$. This means that we can find two vertices $u,v \in A$ at distance $2k$ from one another with $j(u) = j(v) = k$. A pigeonhole argument then reveals a vertex $w$ between $u$ and $v$ for which $A$ is almost entirely contained in $\Gamma^k(w)$.

\begin{proof}[Proof of Theorem \ref{thm:f-k-stability}]
Let $\kappa, \rho > 0$ and let $k: \bN \to \bN$ and $p: \bN \to [\rho,\infty)$ be functions with $k \leq \tfrac{\log n}{3 \log \log n}$, $k \leq \kappa p$, and $pk^3 / n \leq \delta$ for some $\delta > 0$ small to be defined later. Suppose $A \subseteq V(Q_n)$ with $|A| = \binom{n}{k}$ and $|\Gamma(A)| \le \binom{n}{k+1} + \binom{n}{k}p$. For ease of reading, we now state the following two claims here which we will prove later.

\begin{claim}
\label{claim:discard}
There exists $B \subseteq A$ with $|B| \ge \binom{n}{k} - D \binom{n}{k-1}pk$, where $D  = 16 + 32/\rho$, such that for all $S \subseteq B$ we have
\[
 |\Gamma(S) \setminus \Gamma(B \setminus S)| \le |S|\frac{n}{(k+1)} \left( 1+\frac{1}{8k} \right).
\]
\end{claim}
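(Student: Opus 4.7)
The plan is to build $B$ by a greedy peeling procedure. Set $B_0 = A$. Inductively, if there exists $S \subseteq B_{i-1}$ with $|\Gamma(S) \setminus \Gamma(B_{i-1} \setminus S)| > |S|\frac{n}{k+1}(1 + \frac{1}{8k})$, set $S_i := S$ and $B_i := B_{i-1} \setminus S_i$; otherwise, stop and let $B := B_{i-1}$. Since each step strictly decreases the (finite) size of $B_i$, the process terminates, and the terminal $B$ visibly satisfies the required bound for every $S \subseteq B$. The only thing left is to show $|A \setminus B| \le D \binom{n}{k-1} p k$.

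To control the total loss $t := |A \setminus B| = \sum_i |S_i|$, I would exploit the disjointness structure of the "unique neighbour" sets $U_i := \Gamma(S_i) \setminus \Gamma(B_{i-1} \setminus S_i)$. For $j > i$, by construction $S_j \subseteq B_{j-1} \subseteq B_i = B_{i-1} \setminus S_i$, so $\Gamma(S_j) \subseteq \Gamma(B_{i-1}\setminus S_i)$, giving $U_i \cap \Gamma(S_j) = \nul$ and in particular $U_i \cap U_j = \nul$. The same containment gives $\Gamma(B) \cap U_i = \nul$ for all $i$. Since all the $U_i$ and $\Gamma(B)$ lie inside $\Gamma(A)$, this produces the key inequality
\[
 |\Gamma(A)| \;\ge\; |\Gamma(B)| + \sum_i |U_i| \;>\; |\Gamma(B)| + t\cdot \tfrac{n}{k+1}\Bigl(1+\tfrac{1}{8k}\Bigr).
\]

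Now I would plug in the hypothesis $|\Gamma(A)| \le \binom{n}{k+1} + \binom{n}{k} p$ and the Harper-based lower bound from Corollary \ref{cor:harperBound}, $|\Gamma(B)| \ge |B|\tfrac{n}{k+1} - 2\binom{n}{k} = (\binom{n}{k}-t)\tfrac{n}{k+1} - 2\binom{n}{k}$. Using $\binom{n}{k}\tfrac{n}{k+1} - \binom{n}{k+1} = \binom{n}{k}\tfrac{k}{k+1}$, the $\binom{n}{k+1}$ terms cancel and the inequality rearranges to
\[
 t \cdot \tfrac{n}{8k(k+1)} \;\le\; \binom{n}{k}(p+2).
\]
Translating to the $\binom{n}{k-1}$ scale via $\binom{n}{k} = \binom{n}{k-1}\tfrac{n-k+1}{k}$, together with $(k+1) \le 2k$, $(n-k+1)/n \le 1$, and $p+2 \le p(1 + 2/\rho)$ (from $p \ge \rho$), gives $t \le (16 + 32/\rho)\,pk\binom{n}{k-1}$, which is exactly the claimed bound with $D = 16 + 32/\rho$.

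The only delicate step is verifying the pairwise disjointness of the $U_i$ and their disjointness from $\Gamma(B)$; everything else is a careful but routine comparison of binomial coefficients and cancellation of the dominant $\binom{n}{k+1}$ term. No sophisticated isoperimetric input beyond Corollary \ref{cor:harperBound} is needed: the gain comes entirely from the $\tfrac{1}{8k}$ slack that each removed $S_i$ introduces between the unique-neighbour bound and the Harper bound.
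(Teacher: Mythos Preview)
Your proof is correct and follows essentially the same approach as the paper: both run the identical greedy peeling procedure, use the disjointness of the unique-neighbour sets $U_i$ from one another and from $\Gamma(B)$ to get $|\Gamma(A)| \ge |\Gamma(B)| + \sum_i |U_i|$, and then combine the hypothesis on $|\Gamma(A)|$ with Corollary~\ref{cor:harperBound} and the same chain of binomial estimates to extract the bound $t \le D\binom{n}{k-1}pk$. The only cosmetic difference is that the paper phrases the disjointness via a ``maximum index $i$ with $w \in \Gamma(L_i)$'' partition of $\Gamma(A)$, whereas you verify pairwise disjointness directly; the content is the same.
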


\begin{claim}
\label{claim:cleaning}
Let $B \subseteq A$ be a set which satisfies Claim \ref{claim:discard}. Suppose that there is a vertex $u \in V(Q_n)$ and an integer $\ell \in [k,2k]$ such that $|B \cap \Gamma^\ell(u)| \ge \frac{65 k^3}{n}\binom{n}{k}$. Then
\[
|B \cap \Gamma^\ell(u)| \geq \binom{n}{k} - C\binom{n}{k-1}pk,
\]
where $C = 24 + 33/\rho + 32\kappa.$
\end{claim}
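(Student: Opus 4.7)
The strategy is two-fold. First, I would boost the hypothesis using Lemma \ref{expand}: were $|B \cap \Gamma^{\ell}(u)|$ at most $\tfrac{1}{2}\binom{n}{k}$, then since $\ell \le 2k$ and Claim \ref{claim:discard} supplies precisely the hypothesis Lemma \ref{expand} needs, the lemma would give
\[
|B \cap \Gamma^{\ell+2}(u)| \ge \frac{n}{64k^{3}}\cdot \frac{65k^{3}}{n}\binom{n}{k} = \frac{65}{64}\binom{n}{k} > |B|,
\]
a contradiction. Writing $B_{1} := B \cap \Gamma^{\ell}(u)$, $B_{2} := B \setminus B_{1}$, and $b := |B_{2}|$, we have $|B_{1}| > \tfrac{1}{2}\binom{n}{k}$, so the task reduces to proving $b \le C\binom{n}{k-1}pk$.

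For this, I would compare two estimates of $|\Gamma(B)|$. The upper bound $|\Gamma(B)| \le \binom{n}{k+1} + \binom{n}{k}p$ follows from $B \subseteq A$. For the lower bound I split $|\Gamma(B)| \ge |\Gamma(B_{1})| + |\Gamma(B_{2}) \setminus \Gamma(B_{1})|$ and account for three contributions. Firstly, Lemma \ref{kruskalkatona} together with Lemma \ref{kruskbound} bounds $|\Gamma(B_{1})|$ below by $\binom{n}{k+1}$ (from the upper shadow, for $\ell = k$; supplemented by the lower shadow giving an extra $\binom{n}{k-1}$ when $\ell > k$) minus an error linear in $b$ of order $bn/k$. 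Secondly, partitioning $B_{2} = B_{2}^{\mathrm{close}} \cup B_{2}^{\mathrm{far}}$ with $B_{2}^{\mathrm{close}} := B_{2} \cap (\Gamma^{\ell-2}(u) \cup \Gamma^{\ell+2}(u))$, the neighbourhood of every vertex in $B_{2}^{\mathrm{far}}$ avoids the layers $\Gamma^{\ell\pm 1}(u)$ housing $\Gamma(B_{1})$, so $|\Gamma(B_{2}^{\mathrm{far}}) \setminus \Gamma(B_{1})| = |\Gamma(B_{2}^{\mathrm{far}})|$, which is at least $|B_{2}^{\mathrm{far}}|\cdot n/(3k)$ by the local LYM inequality~\eqref{eq:locLYMupper} applied layer-by-layer. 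Thirdly, a second use of Lemma \ref{expand} controls $|B_{2}^{\mathrm{close}}|$: whenever $\ell\pm 2 \le 2k$, the lemma applied with $j = \ell \mp 2$ combined with $|B| = \binom{n}{k}$ gives $|B \cap \Gamma^{\ell\pm 2}(u)| \le (64k^{3}/n)\binom{n}{k}$, whence $|B_{2}^{\mathrm{close}}| \le (128k^{3}/n)\binom{n}{k} = O(k^{2}\binom{n}{k-1})$.

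Combining these three pieces produces a linear inequality in $b$ which, after using $p \ge \rho$ to absorb constant-order terms and $k \le \kappa p$ to absorb the $O(k^{2})$ terms, rearranges to $b \le C\binom{n}{k-1}pk$ with $C = 24 + 33/\rho + 32\kappa$. The three summands of $C$ track the three sources above: $24$ from the Kruskal--Katona shadow error, $33/\rho$ from rescaling absolute constants via $p \ge \rho$, and $32\kappa$ from the $|B_{2}^{\mathrm{close}}|$ bound, naturally of size $O(k^{2}\binom{n}{k-1})$ and turned into $O(\kappa p k \binom{n}{k-1})$ by $k \le \kappa p$.

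The main obstacle I anticipate is the case $\ell > k$, and especially the edge cases $\ell \in \{2k-1, 2k\}$. For $\ell > k$, the lower shadow $\partial(B_{1})$ picks up substantial mass---roughly $(\ell - k)\binom{n}{k}$ for a ball-like $B_{1}$---that must be carefully balanced against the slack $\binom{n}{k}p$. For $\ell + 2 > 2k$, Lemma \ref{expand} cannot bound $|B \cap \Gamma^{\ell+2}(u)|$ directly, so a separate per-vertex argument (using the upper bound on unique contributions in Claim \ref{claim:discard}) is needed to handle that layer. Managing all this bookkeeping cleanly to extract the sharp constant $C = 24 + 33/\rho + 32\kappa$ will constitute the bulk of the work.
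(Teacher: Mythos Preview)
Your first step---boosting via Lemma~\ref{expand} to force $|B_{1}|>\tfrac12\binom{n}{k}$---is correct and identical to the paper's. The second step, however, has a genuine gap: the ``linear inequality in $b$'' you assemble gives no constraint on $b$. When $|B_{1}|$ is $\binom{n}{k}$ minus roughly $b$, the upper-shadow deficit relative to $\binom{n}{k+1}$ is $b\cdot\tfrac{n-k}{k+1}$, i.e.\ $\sim b\,n/k$ with implied constant essentially $1$ (Lemmas~\ref{kruskalkatona}--\ref{kruskbound} give no useful improvement here since $B_{1}$ is \emph{large}, so $1-c_{1}\approx 0$). Your gain from $B_{2}^{\mathrm{far}}$ is at most $b\cdot n/(3k)$, strictly smaller; after subtracting you obtain $-\Theta(bn/k)\le \binom{n}{k}p+\text{(errors)}$, which is vacuous. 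Separately, the claim $|\Gamma(B_{2}^{\mathrm{far}})|\ge |B_{2}^{\mathrm{far}}|\cdot n/(3k)$ ``by local LYM layer-by-layer'' is not valid as stated: vertices of $B_{2}^{\mathrm{far}}$ may lie in arbitrarily high layers $\Gamma^{j}(u)$, where LYM only gives the factor $\tfrac{n-j}{j+1}$.

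The paper's proof differs in two essential ways. First, it runs the counting on $A$ rather than $B$, so that $|A\cap\Gamma^{\ell}|+|A\setminus\Gamma^{\ell}|=\binom{n}{k}$ exactly and the two baseline contributions recombine to $\binom{n}{k+1}$ with no $b\,n/k$ loss. Second---and this is the missing idea---it applies the Kruskal--Katona strengthening of Lemma~\ref{kruskbound} to the \emph{small} piece $A\setminus\Gamma^{\ell}$: writing $|A\setminus\Gamma^{\ell}|=c\binom{n}{k}+\sum_{i<k}\binom{n}{i}$ with $c\le\tfrac12$, Harper plus Lemma~\ref{kruskbound} give $|\Gamma(A\setminus\Gamma^{\ell})|\ge c\binom{n}{k}\tfrac{n-k}{k+1}\bigl(1+\tfrac{1-c}{k}\bigr)$, and the bonus factor $\tfrac{1-c}{k}\ge\tfrac{1}{2k}$ supplies the genuine gain $\tfrac{c}{2k}\binom{n}{k+1}$ that forces $c\le O(pk^{2}/n)$. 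No close/far split is needed; the overlap between $\partial^{+}(A\cap\Gamma^{\ell})$ and $\Gamma(A\setminus\Gamma^{\ell})$ is handled by the crude subtraction $|A\setminus\Gamma^{\ell}|(\ell+2)\le 2k\binom{n}{k}$, which is absorbed via $k\le\kappa p$.
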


Using Claims \ref{claim:discard} and \ref{claim:cleaning}, we now start by proving the following claim.

\begin{claim}
\label{claim:vertexBigNbhds}
Let $B \subseteq A$ be a set which satisfies Claim \ref{claim:discard}. For all $v \in B$, there exists a $j(v) \le k$ such that $|\Gamma^{2j}(v) \cap B| \ge |B| - C\binom{n}{k-1}pk$.
\end{claim}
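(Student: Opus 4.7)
The plan is to use Lemma \ref{expand} to iteratively grow lower bounds on $|B \cap \Gamma^{2j}(v)|$, starting from $v \in B$ where this quantity equals $1$, until we hit the threshold required by Claim \ref{claim:cleaning}. By Claim \ref{claim:discard}, $B$ satisfies hypothesis \eqref{eq:fewUniques}, so the lemma applies: setting $a_j := |B \cap \Gamma^{2j}(v)|$ and $N := n/(64k^3)$, we have $a_0 = 1$, and $a_{j+1} \ge N a_j$ for as long as $a_j \in [1, \tfrac{1}{2}\binom{n}{k}]$ and $2j \le 2k$. Define $j(v)$ to be the smallest non-negative integer with $a_{j(v)} \ge \tfrac{65 k^3}{n}\binom{n}{k}$. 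Once $k/2 \le j(v) \le k$ is established, Claim \ref{claim:cleaning} applied with $u = v$ and $\ell = 2 j(v) \in [k, 2k]$ finishes the proof, since its conclusion gives $|B \cap \Gamma^{\ell}(v)| \ge \binom{n}{k} - C \binom{n}{k-1} p k \ge |B| - C\binom{n}{k-1}pk$.

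The lower bound $2j(v) \ge k$ is immediate from a trivial size count: $a_{j(v)} \le |\Gamma^{2j(v)}(v)| = \binom{n}{2j(v)}$, and for $2j(v) \le k-1$ we would have $\binom{n}{2j(v)} \le \binom{n}{k-1} = \tfrac{k}{n-k+1}\binom{n}{k}$, which is strictly less than $\tfrac{65k^3}{n}\binom{n}{k}$, contradicting the definition of $j(v)$.

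The harder part is showing $j(v) \le k$. Suppose otherwise; then $a_j < \tfrac{65 k^3}{n}\binom{n}{k} < \tfrac{1}{2}\binom{n}{k}$ throughout $0 \le j \le k$ (the right-hand inequality using that $k^3/n \to 0$), and iterating Lemma \ref{expand} $k$ times yields $a_k \ge N^k$. The needed contradiction is $N^k \ge \tfrac{65 k^3}{n}\binom{n}{k}$, and this is where the hypothesis $k \le \tfrac{\log n}{3\log\log n}$ earns its keep. After taking logarithms and using $\binom{n}{k} \le n^k/k!$ together with the Stirling bound $k! \ge (k/e)^k$, the desired inequality reduces to $\log n \ge 2k\log k + O(k)$. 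The bound $k \le \tfrac{\log n}{3\log\log n}$ forces $k \log k \le \tfrac{1}{3}\log n$, giving ample slack for $n$ sufficiently large. I expect this verification is the main technical obstacle, and the factor $3$ in the denominator of the restriction on $k$ is precisely what provides the required room.
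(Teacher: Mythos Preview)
Your proof is correct and follows essentially the same approach as the paper: iterate Lemma \ref{expand} starting from $a_0=1$, use the growth bound $k \le \tfrac{\log n}{3\log\log n}$ to rule out $j(v)>k$, bound $j(v)$ from below via $|\Gamma^{2j(v)}(v)|=\binom{n}{2j(v)}$, and finish by invoking Claim \ref{claim:cleaning}. The only cosmetic difference is your choice of stopping rule (first time $a_j$ crosses the threshold $\tfrac{65k^3}{n}\binom{n}{k}$ needed for Claim \ref{claim:cleaning}) versus the paper's (first time the growth inequality fails, forcing $a_j \ge \tfrac12\binom{n}{k}$); both lead to the same conclusion.
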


\begin{proof}[Proof of Claim \ref{claim:vertexBigNbhds}]
Fix a vertex $v \in B$ and let $j$ be the least integer such that
\[
 |B \cap \Gamma^{2(j+1)}(v)| < \frac{n}{64k^3}|B \cap \Gamma^{2j}(v)|.
\]
If $j \leq k$ then, by Lemma \ref{expand}, $|B \cap \Gamma^{2j}(v)|$ must be at least $\frac{1}{2} \binom{n}{k}$, which means that we must have $2j \geq k$. Since for $n$ large enough we have $\frac{1}{2} \binom{n}{k} \geq \frac{65k^3}{n} \binom{n}{k}$, by Claim \ref{claim:cleaning} we obtain $|B \cap \Gamma^{2j}(v)| \ge \binom{n}{k} - C\binom{n}{k-1}pk$ as desired.

Suppose now that $j \ge k+1$. Since $v \in B$, we have $|B \cap \Gamma^{0}(v)| = |B \cap \{v\}| = 1$. Then, by the choice of $j$, we obtain
\[
 |B \cap \Gamma^{2(k+1)}(v)| \ge \left ( \frac{n}{64k^3} \right )^{k+1}.
\]	
On the other hand,
\[
 |B \cap \Gamma^{2(k+1)}(v)| \le |B| \le \binom{n}{k}.
\]
Since $k \le \frac{\log n}{3 \log \log n}$, we have a contradiction for $n$ sufficiently large, and so $j \le k$. This completes the proof of Claim \ref{claim:vertexBigNbhds}.
\end{proof}

For $j \le k$, let $H(j) = \{v \in B : j(v) = j\}$. Fix $j < k$, and suppose that there are distinct vertices $u,w \in H(j)$ such that $d(u,w) = 2j$. Without loss of generality, we may assume that $Z_u = \emptyset$ and $Z_w = [2j]$. Observe that 
\[
 \Gamma^{2j}(u) \cap \Gamma^{2j}(w) = \{U \cup W : U \in [2j]^{(j)}, W \in [2j+1,n]^{(j)}\}.
\]
The size of this set is clearly $\binom{2j}{j}\binom{n-2j}{j}$. On the other hand
\begin{align*}
 \Gamma^{2j}(u) \cap \Gamma^{2j}(w) & \supseteq \Gamma^{2j}(u) \cap \Gamma^{2j}(w) \cap B \\
 & = B \setminus \left(\left(B \setminus \Gamma^{2j}(w)\right) \cup \left(B \setminus \Gamma^{2j}(u)\right)\right).
\end{align*}
Recall that by the definition of $j = j(u) = j(w)$ we have $|B \setminus \Gamma^{2j}(u)| \le C\binom{n}{k-1}pk$ and $|B \setminus \Gamma^{2j}(w)| \le C\binom{n}{k-1}pk$, therefore
\begin{align*}
 |\Gamma^{2j}(u) \cap \Gamma^{2j}(w)| &\ge \binom{n}{k} - 2C\binom{n}{k-1}pk \\
 &\ge \binom{n}{k}\left(1-3C\delta\right) \ge \frac{1}{2}\binom{n}{k},
\end{align*}
for $\delta$ sufficiently small. Putting these bounds together gives $\binom{2j}{j}\binom{n-2j}{j} \ge \frac{1}{2}\binom{n}{k}$. But $j < k$ and so
\begin{align*}
 \binom{2j}{j}\binom{n-2j}{j} & \leq 4^j\binom{n}{j} \\
 & \leq 4^k \binom{n}{k} \frac{k}{n-k} \\
 & < \frac{1}{2}\binom{n}{k}.
\end{align*}
for $n$ sufficiently large, since $k \le \tfrac{\log n}{3\log \log n}.$ We have a contradiction and so no two vertices from $H(j)$ can be at distance $2j$ from each other.

Since for any $v \in H(j)$ by definition we have $|B \setminus \Gamma^{2j}(v)| \le C\binom{n}{k-1}pk$, and no two vertices from $H(j)$ can be at distance $2j$ from each other, we obtain $|H(j)| \le C\binom{n}{k-1}pk$. Summing over $j < k$ gives
\begin{align*}
|H(k)| & = |B| - \sum_{j=0}^{k-1} |H(j)| \\
 & \geq |B| - C\binom{n}{k-1}pk^2.
\end{align*}
Therefore for a vertex $v \in H(k),$ since $pk^2 \le \delta\tfrac{n}{k},$
\begin{align*}
	\left|\Gamma^{2k}(v) \cap H(k)\right| &\ge |B\cap \Gamma^{2k}(v)| - \left|B\setminus H(k)\right| \\
	&\ge \binom{n}{k} - C\binom{n}{k-1}pk - C\binom{n}{k-1}pk^2 \\
	&\ge \binom{n}{k}\left(1-3C\delta \right),
\end{align*}
for $n$ sufficiently large. This is positive for $\delta$ sufficiently small so that there must exist two vertices in $H(k)$ at distance $2k$ from each other. Let $u,v \in V$ be such vertices and without loss of generality, suppose that $Z_u = \emptyset$ and $Z_v = [2k]$.

Any vertex in $\Gamma^{2k}(u)\cap \Gamma^{2k}(v) \cap B$ must be of the form $X \cup Y$, where $X \in [2k]^{(k)}$ and $Y \in [2k+1,n]^{(k)}$, and so any such vertex must be at distance $k$ from some vertex in $[2k]^{(k)}$. For $w \in [2k]^{(k)}$, let $f(w) = |\{z \in \Gamma^{2k}(u)\cap \Gamma^{2k}(v) \cap B : d(w,z) = k\}|$. Then we have for sufficiently large $n$ and sufficiently small $\delta,$
\begin{align*}
 \sum_{w \in [2k]^{(k)}} f(w) & = |\Gamma^{2k}(u)\cap \Gamma^{2k}(v)\cap B| \\
 & \geq \binom{n}{k} - C\binom{n}{k-1}pk \\
 &\ge \binom{n}{k}\left(1-2C\delta\right) \ge \frac{1}{2}\binom{n}{k}.
\end{align*}
Hence by the pigeonhole principle, there exists a vertex $w \in [2k]^{(k)}$ for which we have
\[
 |\Gamma^k(w) \cap B| \geq \frac{1}{2}\frac{\binom{n}{k}}{\binom{2k}{k}}. 
\]
Recall that $k \le \frac{\log n}{3 \log \log n}$ so that $\binom{2k}{k} \le \tfrac{2n}{65 k^3}$ and so $|\Gamma^k(w) \cap B| \ge \tfrac{65k^3}{n}\binom{n}{k}$ for sufficiently large $n$. By Claim \ref{claim:cleaning} we have $|\Gamma^k(w) \cap B| = \binom{n}{k} -C \binom{n}{k-1}pk$, proving Theorem \ref{thm:f-k-stability}.
\end{proof}

We now complete our argument by proving Claims \ref{claim:discard} and \ref{claim:cleaning}.
\begin{proof}[Proof of Claim \ref{claim:discard}]
Let us run the following algorithm.

\begin{algorithm}[H]
\SetKw{KwFn}{Initialization}
    \KwFn{Set $i=0$, $B_0=A$}\;
    \While{$\exists S \subseteq B_i$ such that $|\Gamma(S) \setminus \Gamma(B_i \setminus S)| > |S|\frac{n}{(k+1)} \left( 1+\frac{1}{8k} \right)$}{
      pick such an $S$\;
      set $i = i+1$\;
      set $L_i = S$\;
      set $B_i = B_{i-1} \setminus S$\;   
    }
\label{discardalgorithm}
\end{algorithm}
Suppose that the algorithm terminates when $i=m$. Suppose that the algorithm terminates when $i=m$. Since the sets $L_1, \ldots, L_m, B_m$ partition $A$, for any $w \in \Gamma(A)$ we either have $w \in \Gamma(B_m)$, or $w \notin \Gamma(B_m)$ and there is some maximum $i$ such that $w \in \Gamma(L_i)$. This gives
\[
 |\Gamma(A)| = \sum_{i=1}^m|\Gamma(L_i) \setminus \Gamma(B_{i-1}\setminus L_i)| + |\Gamma(B_m)|.
\]
Recall that for each $i<m$ we have $|\Gamma(L_i) \setminus \Gamma(B_{i-1} \setminus L_i)| > |L_i|\frac{n}{k+1}(1+\tfrac{1}{8k})$, and so
\[
 |\Gamma(A)| \ge |A \setminus B_m|\frac{n}{k+1} \left( 1+\frac{1}{8k} \right) + |\Gamma(B_m)|.
\]
Corollary \ref{cor:harperBound} gives $|\Gamma(B_m)| \ge |B_m|\frac{n}{k+1} - 2 \binom{n}{k}$. Therefore
\begin{align*}
 |\Gamma(A)| & \geq |A \setminus B_m| \frac{n}{k+1} + |A \setminus B_m|\frac{n}{8k(k+1)} + |B_m| \frac{n}{k+1} - 2 \binom{n}{k} \\
 & = |A| \frac{n}{k+1} + |A \setminus B_m|\frac{n}{8k(k+1)} - 2 \binom{n}{k} \\
 & = \frac{n!}{k!(n-k)!} \frac{n}{k+1} + |A \setminus B_m|\frac{n}{8k(k+1)} - 2 \binom{n}{k} \\
 & \geq \binom{n}{k+1} + |A \setminus B_m|\frac{n}{8k(k+1)} - 2 \binom{n}{k}.
\end{align*}
Since by assumption $|\Gamma(A)| \le \binom{n}{k+1} + \binom{n}{k}p$ and $p \geq \rho$, we obtain
\begin{align*}
 |A \setminus B_m| & \leq \left( \binom{n}{k}p + 2 \binom{n}{k} \right)  \frac{8k(k+1)}{n} \\
 & \leq \left (1+\frac{2}{\rho} \right ) \binom{n}{k} \frac{8pk(k+1)}{n} \\
 & \leq \left (16+ \frac{32}{\rho} \right ) \binom{n}{k}\frac{pk^2}{n-k+1} \\
 & = D \binom{n}{k-1}pk.
\end{align*}
Setting $B = B_m$ we obtain the desired result.
\end{proof}

\begin{proof}[Proof of Claim \ref{claim:cleaning}]
Let $B$ be the set given by Claim \ref{claim:discard} (so $|B| \geq \binom{n}{k} - D \binom{n}{k-1} p k$). Let $v \in V(Q_n)$ be such that for some $\ell \in [k,2k]$ we have $|B \cap \Gamma^{\ell}(v)| \geq \frac{65k^3}{n}\binom{n}{k}$. (Without loss of generality we again assume that $v=(0,\ldots,0)$, so that $Z_v = \emptyset$.) If we also have $|B \cap \Gamma^{\ell}(v)| \leq \frac{1}{2}\binom{n}{k}$ then by Lemma \ref{expand} we have
\[
 |B \cap \Gamma^{\ell+2}(v)| \geq \frac{n}{64k^3} \frac{65k^3}{n}\binom{n}{k} > \binom{n}{k}
\]
which contradicts the fact that $|B| \le \binom{n}{k}$. Therefore we may assume that $|B \cap \Gamma^{\ell}(v)| \ge \frac{1}{2}\binom{n}{k}$ and so $|A \cap \Gamma^{\ell}(v)| \ge \frac{1}{2}\binom{n}{k}$ and $|A \setminus \Gamma^{\ell}(v)| \le \frac{1}{2}\binom{n}{k}$. Recall that $k \geq 1$ and $p \geq \rho$. Since
\[
pk+2 \leq pk(1+2/\rho) < Cpk,
\]
if $|A \cap \Gamma^\ell(v)| \geq \binom{n}{k} - \binom{n}{k-1}(pk+2)$ then we are done. Hence, throughout the proof, we assume $|A \setminus \Gamma^\ell(v)| \ge \binom{n}{k-1}(pk+2)$.

We can bound the size of the neighbourhood of $A$ from below as follows: We count the neighbours of $A \cap \Gamma^{\ell}(v)$ in $\Gamma^{\ell+1}(v)$ (ignoring the neighbours in $\Gamma^{\ell-1}(v)$), and then we add the neighbours of $A \setminus \Gamma^{\ell}(v)$ that are not in $\Gamma^{\ell+1}(v)$. The latter quantity can again be bounded from below by using the fact that any vertex $u$ in $A \setminus \Gamma^{\ell}(v)$ has either $\ell+2$ neighbours in $\Gamma^{\ell+1}(v)$ (if $u \in \Gamma^{\ell+2}(v)$), or otherwise no such neighbours at all. Therefore we have
\begin{equation}
 |\Gamma(A)| \ge |\Gamma(A \cap \Gamma^{\ell}(v))\cap \Gamma^{\ell+1}(v)| + |\Gamma(A \setminus \Gamma^{\ell}(v))| - |A \setminus \Gamma^{\ell}(v)|(\ell+2). \label{count1}
\end{equation}

As we remarked at the beginning of the proof, we may assume $|A \setminus \Gamma^\ell(v)| \geq \binom{n}{k-1}(pk+2)$. By Theorem \ref{harpermod}, $|\Gamma(A\setminus \Gamma^\ell(v))|$ is at least as large as the upper shadow of the first $|A\setminus \Gamma^\ell(v)| - \sum_{i=0}^{k-1}\binom{n}{i}$ elements of $[n]^{(k)}$ according to the $<_L$ order. Write
\begin{equation}
\label{eq:cDefn}
 c\binom{n}{k} = |A \setminus \Gamma^{\ell}(v)|-\sum_{i=0}^{k-1}\binom{n}{i},
\end{equation}
and observe that by the assumption that $|A \setminus \Gamma^{\ell}(v)| \le \frac{1}{2}\binom{n}{k}$ we have $c \leq 1/2$.

Let $\alpha \in (0,1)$ be such that
\[
 c\binom{n}{k} = \binom{n}{k} - \binom{\alpha n}{k}.
\]
Denoting by $W$ the set of the first $c\binom{n}{k}$ elements of $[n]^{(k)}$ according to the $<_L$ order, by Lemma \ref{kruskalkatona} and Corollary \ref{cor:f+monotone} we have
\begin{align*}
 |\Gamma(A \setminus \Gamma^{\ell}(v))| & \ge |\partial^+(W)| \geq |W| \frac{\binom{n}{k+1} - \binom{\alpha n}{k+1}}{\binom{n}{k} - \binom{\alpha n}{k}} = c\binom{n}{k}\frac{\binom{n}{k+1} - \binom{\alpha n}{k+1}}{\binom{n}{k} - \binom{\alpha n}{k}}
\end{align*}
(As in Lemma \ref{expand} we refrain from ensuring things are integer valued for ease of reading.)

Recalling the relation between $\alpha$ and $c$, Lemma \ref{kruskbound} gives
\begin{equation}
\label{eq:finalEstimate1}
 |\Gamma(A \setminus \Gamma^{\ell}(v))| \geq c\binom{n}{k}\frac{n-k}{k+1} \left( 1+\frac{1-c}{k} \right).
\end{equation}
We clearly have
\[
 |\Gamma(A \cap \Gamma^{\ell}(v))\cap \Gamma^{\ell+1}(v)| = |\partial^+(A \cap \Gamma^{\ell}(v))|.
\]
As we mentioned earlier, for a family $\cA \subseteq [n]^{(\ell)}$ we have $\partial^+\cA = (\partial \cA^c)^c$, thus by Theorem \ref{thm:kruskat} the size of the upper shadow of $\cA$ is minimised when $\cA^c$ is isomorphic to the initial segment of colex $<_C$ on $[n]^{(n-\ell)}$, i.e.,  when $\cA$ is isomorphic to the initial segment of lex $<_L$ on $[n]^{(\ell)}$.

Since $p$ is bounded from below by $\rho$, we have
\[
k \leq pk/\rho < Cpk.
\]
Thus, if $|A \cap \Gamma^{\ell}(v)| \geq \binom{n}{k} - \binom{n}{k-1}k$ then again the claim holds and there is nothing to prove. Hence, we may assume that $\tfrac{1}{2} \binom{n}{k} \leq |A \cap \Gamma^{\ell}(v)| \leq \binom{n}{k} - \binom{n}{k-1}k$. Applying the Pascal's rule $k$ times, we have
\begin{align*}
 |A \cap \Gamma^{\ell}(v)| & \leq \binom{n}{k} - \binom{n}{k-1}k \\
 & = \binom{n-1}{k} + \binom{n-1}{k-1} - \binom{n}{k-1}k \\
 & \leq \binom{n-1}{k} - \binom{n}{k-1}(k-1) \leq \ldots \leq \binom{n-k}{k}.
\end{align*}
Recall also that we have $k \leq \ell \leq 2k$. This implies that $\binom{n-k}{k} \leq \binom{n-(\ell-k)}{k}$. Thus every set in the initial segment of size $|A \cap \Gamma^{\ell}(v)|$ of $<_L$ on $[n]^{(\ell)}$ consists of the set $[\ell - k]$ union one of the $\binom{n-(\ell-k)}{k}$ subsets of $[\ell-k+1,n]$ of size $k$. Hence we can again imagine removing $[\ell-k]$ from all sets in our segment and instead working in $[\ell-k+1,n]$. We now have an initial segment of size $|A \cap \Gamma^{\ell}(v)|$ in the $<_L$ order in $[\ell-k+1,n]^{(k)}$ which we denote by $\cH$. Then \eqref{eq:locLYMupper}, together with the fact that $\ell \leq 2k$, gives
\begin{align}
  |\partial^+(A \cap \Gamma^{\ell}(v))| & \ge |\partial^+(\cH)| \nonumber \\
  & \ge |A \cap \Gamma^{\ell}(v)|\frac{n-(\ell-k)-k}{k+1} \nonumber \\
  & = |A \cap \Gamma^{\ell}(v)| \left ( \frac{n-k}{k+1} -  \frac{\ell-k}{k+1}\right ) \nonumber \\
  & \geq |A \cap \Gamma^{\ell}(v)| \frac{n-k}{k+1} -  |A \cap \Gamma^{\ell}(v)| \nonumber \\
  & \geq |A \cap \Gamma^{\ell}(v)|\frac{n-k}{k+1} - \binom{n}{k}. \label{eq:finalEstimate2}
\end{align}
The facts that $|A \setminus \Gamma^{\ell}(v)| \leq \frac{1}{2}\binom{n}{k}$ and $\ell \leq 2k$ imply that
\begin{equation}
\label{eqn:sillyBound}
|A \setminus \Gamma^{\ell}(v)|(\ell+2) \leq \frac{1}{2}\binom{n}{k} (2k+2) \leq 2k\binom{n}{k}.
\end{equation}
Hence we can rewrite~\eqref{count1} using \eqref{eq:finalEstimate1}, \eqref{eq:finalEstimate2}, and \eqref{eqn:sillyBound}, to obtain
\begin{align*}
 |\Gamma(A)| & \geq |A \cap \Gamma^{\ell}(v)|\frac{n-k}{k+1} - \binom{n}{k} + c\binom{n}{k}\frac{n-k}{k+1} \left( 1+\frac{1-c}{k} \right) -2\binom{n}{k}k \\
 & = \left ( |A \cap \Gamma^{\ell}(v)| + c\binom{n}{k} \right ) \frac{n-k}{k+1} + \frac{c(1-c)}{k} \binom{n}{k} \frac{n-k}{k+1} -3\binom{n}{k}k.
\end{align*}
Since we defined $c\binom{n}{k} = |A \setminus \Gamma^{\ell}(v)|-\sum_{i=0}^{k-1}\binom{n}{i}$, and also we have $c \leq 1/2$, we obtain
\begin{align*}
 |\Gamma(A)| & \geq \left(|A| - \sum_{i=0}^{k-1}\binom{n}{i} \right) \frac{n-k}{k+1} + \frac{c}{2k}\binom{n}{k+1} - 3\binom{n}{k}k \\
 & \geq \binom{n}{k+1} - k \binom{n}{k-1} \frac{n-k}{k+1} + \frac{c}{2k}\binom{n}{k+1} -3\binom{n}{k}k \\
 & \geq \binom{n}{k+1} + \frac{c}{2k}\binom{n}{k+1} - 4\binom{n}{k}k.
\end{align*}
Since we assume $|\Gamma(A)| \leq \binom{n}{k+1} + \binom{n}{k}p$, and $k \leq \kappa p$, we must have
\[
 c \le \frac{2k}{\binom{n}{k+1}} \binom{n}{k}(p+4k) \leq \frac{2pk(k+1)(1+4\kappa)}{n-k}.
\]
By the definition of $c$ in \eqref{eq:cDefn}, we then have
	\begin{align*}
		|A \setminus \Gamma^\ell(v)| &= \sum_{i=0}^{k-1}\binom{n}{i} + \frac{2pk(k+1)(1+4\kappa)}{n-k}\binom{n}{k} \\
		& \leq k \binom{n}{k-1} + \frac{8pk^2(1+4\kappa)}{n-k+1}\binom{n}{k} \\
		& = k\binom{n}{k-1} +8pk(1+4\kappa)\binom{n}{k-1} \leq \binom{n}{k-1}pk(8+32\kappa + 1/\rho).
	\end{align*}
and so $|B \setminus \Gamma^{\ell}(v)| \leq (8+32\kappa + 1/\rho)\binom{n}{k-1} pk$. Since $|B| \geq \binom{n}{k} - D \binom{n}{k-1} p k$, we then have 
	\begin{align*}
		|B \cap \Gamma^{\ell}(v)| \geq \binom{n}{k} - \left(D+(8+32\kappa + 1/\rho) \right) \binom{n}{k-1} p k = \binom{n}{k} - C \binom{n}{k-1}.
	\end{align*}
\end{proof}

\noindent
{\bf Acknowledgement} \hspace{.02in}
The authors would like to thank Alex Scott for helpful initial discussions of the problem considered in this paper. During a large part of this project, the first author was affiliated with the Mathematical Institute of the University of Oxford.

\bibliographystyle{plain}

\end{document}